\newcommand{\al}{\alpha}
\newcommand{\be}{\beta}
\newcommand{\ga}{\gamma}
\newcommand{\la}{\lambda}
\newcommand{\om}{\omega}
\newcommand{\eps}{\varepsilon}
\newcommand{\iy}{\infty}
\theoremstyle{plain}
\newtheorem{thm}{Theorem}
\newtheorem{lem}{Lemma}
\theoremstyle{definition}
\newtheorem{alg}{Algorithm}
\newtheorem{ip}{Inverse Problem}
\theoremstyle{remark}
\newtheorem{remark}{Remark}
\begin{document}

\begin{center}
{\large\bf Partial inverse problems for quadratic differential pencils on a graph with a loop
}
\\[0.2cm]
{\bf Natalia P. Bondarenko, Chung-Tsun Shieh} \\[0.2cm]
\end{center}

\vspace{0.5cm}

{\bf Abstract.} In this paper, partial inverse problems for the quadratic pencil of Sturm-Liouville operators on a graph with a loop were studied.  
These problems consist in recovering the pencil coefficients on one edge of the graph
(a boundary edge or the loop) from spectral characteristics, while the coefficients on the other edges are known a priori.
We obtain uniqueness theorems and constructive solutions for the partial inverse problems.

\medskip

{\bf Keywords:} partial inverse spectral problem; differential pencil; quantum graph; uniqueness theorem; Riesz basis

\medskip

{\bf AMS Mathematics Subject Classification (2010):} 34A55 34B08 34B09 34B24 34B45

\vspace{1cm}

{\large \bf 1. Introduction}

\bigskip

This paper concerns inverse spectral problems for differential operators on geometrical graphs.
Such operators model wave propagation in thin structures and are also called {\it quantum graphs.}
Differential operators on graphs have applications in organic chemistry, nanotechnology, waveguide theory, mesoscopic physics, electronics and other fields
of science and engineering (see \cite{BCFK06, Exner08}). 

Inverse problems of spectral analysis consist in recovering operators from their spectral characteristics.
A good survey of inverse problems for differential operators on graphs is provided in \cite{Yur16}.
Differential {\it pencils} contain a nonlinear dependence on the spectral parameter, and therefore inverse problems
for them are more difficult for investigation than inverse problems for Sturm-Liouville operators. 
Inverse problems for quadratic  pencils on a {\it finite interval} were studied in 
\cite{GG81, BY06, BY12, HP12, Pro14}, however, some questions still remain open.
In recent years, inverse spectral problems for differential pencils on {\it graphs} have attracted much attention
\cite{Yur08-tree, Yur14-cycle, Yur16-rooted, Yur17-a, Yur17-bush, Yur19, Bond17, Bond19}.

In general, recovering a differential operator or a pencil on a graph requires much spectral data
(for example, several spectra, corresponding to different boundary conditions, see \cite{Yur08-tree}).
In this paper, we focus on {\it partial} inverse problems  which consist in determination of the pencil coefficients
on a part of the graph  while the coefficients on the other part are known a priori.
Such problems generalize the famous Hochstadt-Lieberman problem \cite{HL78} on a finite interval 
(see also \cite{GS00, Sakh01, HM04-half, MP10}).
Partial inverse problems usually require less spectral data.
%than complete ones.
In particular, it has been shown
in \cite{Yang10, Yang11, Bond18-amp, Bond18-mixed} that one needs only a fractional part of a spectrum to reconstruct the Sturm-Liouville potential
on one edge or on a part of an edge of the star-shaped graph with equal edge lengths  if the potentials are given on all the other edges.
Some more general partial inverse problems for differential operators on graphs were studied in \cite{BSh17, YW17, Bond18, BY18}.
For partial inverse problems for differential pencils on graphs, the papers \cite{Bond17, Bond19} are the only literature we know. 

In this paper, we investigate partial inverse problems for the quadratic Sturm-Liouville pencil on a graph with a loop. The purpose of this research is to 
recover pencil coefficients on a boundary edge or on the loop by using a part of the spectrum.  At first we study a sufficient condition for a subspectrum, which can 
uniquely reconstruct the pencil, in terms of eigenvalue asymptotics. Secondly, we present 
the main results of the paper  those are uniqueness theorems and constructive algorithms for solving partial inverse problems.
Although  the graph of a particular structure is studied (see Figure~\ref{fig:1}) in this paper, we note that our results could be generalized
for differential pencils on arbitrary graphs with cycles.  We choose  the graph with one loop to illustrate our  main ideas 
in order to simplify the asymptotic analysis of the spectrum, since for pencils on graphs with cycles 
this analysis is rather complicated.  Our approach to partial inverse problems is based on construction of special 
functional systems in a suitable Hilbert space, and investigation of the completeness and the Riesz-basis property of these systems.

\bigskip	

{\large \bf 2. Main Results}

\bigskip

Consider a compact graph $G$ with the vertices $\{ v_j \}_{j = 1}^m$
and the edges $\{ e_j \}_{j = 1}^m$, where $e_j = (v_j, v_m)$, $j = \overline{1, m}$,
so $e_m$ is a loop containing the only vertex $v_m$ (see Figure~\ref{fig:1}). 
Thus $\{ v_j \}_{j = 1}^{m-1}$ are boundary vertices and $v_m$ is the internal vertex.
Let all the edges $\{ e_j \}_{j = 1}^m$ have equal lengths $\pi$.  
For each edge $e_j$, we introduce a parameter $x_j \in [0, \pi]$
in such a way, that for $j = \overline{1, m}$, the end $x_j = 0$ corresponds to the vertex $v_j$, 
and the end $x_j = \pi$ corresponds to $v_m$. For $j = m$, the both ends correspond 
to the vertex~$v_m$. A function on the graph $G$ is a vector function $y = [y_j]_{j = 1}^m$,
$y_j = y_j(x_j)$, $x_j \in (0, \pi)$.

\begin{figure}[h!]
\centering
\begin{tikzpicture}
\filldraw (0, 0) circle (2pt) node[anchor=east]{$v_m$};
\filldraw (3, 1.5) circle (2pt) node[anchor=west]{$v_1$};
\filldraw (3.5, 0.5) circle (2pt) node[anchor=west]{$v_2$};
\filldraw (3, -1.5) circle (2pt) node[anchor=west]{$v_{m-1}$};
\draw (0, 0) edge node[auto]{$e_1$} (3, 1.5);
\draw (0, 0) edge node[auto]{$e_2$} (3.5, 0.5);
\draw (0, 0) edge node[anchor=north]{$e_{m-1}$} (3, -1.5);
\filldraw (3.5, -0.5) circle (0.5pt);
\filldraw (3.5, -0.2) circle (0.5pt);
\filldraw (3.4, -0.8) circle (0.5pt);
\draw (-1, 0) circle (1);
\draw (-2.3, 0) node{$e_m$};
\end{tikzpicture}
\caption{Graph $G$}
\label{fig:1}
\end{figure}

Consider the following boundary value problem $L$ on the graph $G$:
\begin{align} \label{eqv}
& -y_j''(x_j) + q_j(x_j) y_j(x_j) + 2 \la p_j(x_j) y_j(x_j) = \la^2 y_j(x_j),  
\quad x_j \in (0, \pi), \quad j = \overline{1, m}, \\ \label{bc}
& y_j(0) = 0, \quad j = \overline{1, m-1}, \\ \label{cont}
& y_m(0) = y_j(\pi), \quad j = \overline{1, m}, \\ \label{kirch}
& y_m'(0) = \sum_{j = 1}^m y_j'(\pi).
\end{align}
Here \eqref{eqv}--\eqref{kirch} is a differential pencil with the nonlinear dependence on the spectral parameter $\la$;
$y = [y_j]_{j = 1}^m$, $p = [p_j]_{j = 1}^m$ and $q = [q_j]_{j = 1}^m$ are complex-valued functions on $G$, $y_j \in W_1^2[0, \pi]$, $p_j \in AC[0, \pi]$
(the class of absolutely continuous functions) and $q_j \in L(0, \pi)$, $j = \overline{1, m}$. We have the Dirichlet boundary conditions
\eqref{bc} in the boundary vertices and the standard matching conditions \eqref{cont}-\eqref{kirch} in the internal vertex.
The relations \eqref{cont} are continuity conditions on the function $y$, and the relation \eqref{kirch} is called the Kirchhoff's
condition, since it expresses Kirchhoff's law in electrical circuits. One can also study boundary and matching conditions
in a more general form (see \cite{Yur14-cycle}).

Define $\al_j = \frac{1}{\pi} \int_0^{\pi} p_j(t) \, dt$. Below we will say that assumption $(A)$ holds,
if the following conditions $(i)$--$(iii)$ are fulfilled:

$(i)$ $\al_j \in \mathbb R$, $j = \overline{1, m}$,

\smallskip

$(ii)$ $\al_j \not \equiv \al_k \pmod 1$, $j, k = \overline{1, m}$, $j \ne k$.

\smallskip

$(iii)$ $\al_m = 0$.

\smallskip

Here and below the notation $a \equiv b \pmod c$ means that $\frac{a - b}{c} \in \mathbb Z$.

Our first result is the following theorem, which describes asymptotic behavior of the eigenvalues of $L$.

\begin{thm} \label{thm:asympt}
Suppose that assumption $(A)$ holds.
Then the boundary value problem $L$ has a countable set of eigenvalues, which can be numbered as 
$\{ \la_{nk} \}_{(n, k) \in \mathcal I}$, counting with their multiplicities, and satisfy 
the following asymptotic  behavior 
\begin{equation} \label{asymptla}
\la_{nk} = 2 n + \be_k + O\left(|n|^{-1}\right), \quad |n| \to \iy, \quad k = \overline{1, 2m},
\end{equation}
where $\be_k \in (-1, 1)$ are distinct numbers for $k = \overline{1, 2m-1}$, $\be_{2m} = 0$, $\be_k \not \equiv \al_j \pmod 1$, $k = \overline{1, 2m}$, $j = \overline{1, m-1}$,
and 
$$
\mathcal I := \{ (n, k) \colon n \in \mathbb Z, \, k = \overline{1, m+1}\} \cup 
\{ (n, k) \colon n \in \mathbb Z \backslash \{ 0 \}, \, k = \overline{m + 2, 2 m} \}.
$$
\end{thm}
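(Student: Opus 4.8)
The plan is to express the spectrum of $L$ as the zero set of a scalar entire characteristic function $\Delta(\la)$, to extract the leading term $\Delta_0(\la)$ of $\la^{m-1}\Delta(\la)$ and locate its zeros exactly, and then to transfer this information to $\Delta$ by Rouch\'e's theorem together with an argument‑principle count that fixes the index set $\mathcal I$.

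\emph{Step 1: the characteristic function.} On each $e_j$ let $C_j(x_j,\la)$, $S_j(x_j,\la)$ solve \eqref{eqv} with $C_j(0,\la)=S_j'(0,\la)=1$, $S_j(0,\la)=C_j'(0,\la)=0$; since \eqref{eqv} has no first‑order term, the Wronskian $C_jS_j'-C_j'S_j$ is identically $1$. From the standard transformation‑operator (Liouville--Green) representation for quadratic pencils, valid because $p_j\in AC[0,\pi]$ and $q_j\in L(0,\pi)$, one gets, uniformly in horizontal strips and with $P_j(x)=\int_0^xp_j(t)\,dt$,
$$
S_j(\pi,\la)=\frac{\sin(\la\pi-P_j(\pi))}{\la}+O\!\Big(\tfrac{e^{|\mathrm{Im}\,\la|\pi}}{\la^2}\Big),\qquad
S_j'(\pi,\la)=\cos(\la\pi-P_j(\pi))+O\!\Big(\tfrac{e^{|\mathrm{Im}\,\la|\pi}}{\la}\Big),
$$
with $C_j(\pi,\la)$ having the same leading term $\cos(\la\pi-P_j(\pi))$ as $S_j'(\pi,\la)$, and $C_j'(\pi,\la)=-\la\sin(\la\pi-P_j(\pi))+O(e^{|\mathrm{Im}\,\la|\pi})$. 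Writing $y_j=c_jS_j$ for $j\le m-1$ and $y_m=a_mC_m+b_mS_m$, substituting into \eqref{cont}--\eqref{kirch}, eliminating the $c_j$ by $a_m=c_jS_j(\pi,\la)$, and using $C_mS_m'-C_m'S_m\equiv1$, the $2\times2$ determinant in $(a_m,b_m)$ collapses to the entire function
$$
\Delta(\la)=\big(2-C_m(\pi,\la)-S_m'(\pi,\la)\big)\prod_{j=1}^{m-1}S_j(\pi,\la)
-S_m(\pi,\la)\sum_{j=1}^{m-1}S_j'(\pi,\la)\prod_{\substack{i=1\\i\ne j}}^{m-1}S_i(\pi,\la),
$$
of exponential type $m\pi$; a brief check of what happens when some $S_j(\pi,\la)$ with $j<m$ vanishes shows that the zeros of $\Delta$, counted with multiplicity, are exactly the eigenvalues of $L$, and since $\Delta\not\equiv0$ (see Step 2) this spectrum is countable.

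\emph{Step 2: the model function and its zeros.} Inserting the asymptotics and using $\al_m=0$ (condition $(iii)$) yields $\la^{m-1}\Delta(\la)=\Delta_0(\la)+O\!\big(|\la|^{-1}e^{m\pi|\mathrm{Im}\,\la|}\big)$, where
$$
\Delta_0(\la)=2\sin\tfrac{\la\pi}{2}\Big[2\sin\tfrac{\la\pi}{2}\prod_{j=1}^{m-1}\sin\pi(\la-\al_j)
-\cos\tfrac{\la\pi}{2}\sum_{j=1}^{m-1}\cos\pi(\la-\al_j)\prod_{\substack{i=1\\i\ne j}}^{m-1}\sin\pi(\la-\al_i)\Big]
$$
is $2$‑periodic, of exponential type $m\pi$, and (by a direct computation of its growth) of sine type: $|\Delta_0(\la)|\ge c\,e^{m\pi|\mathrm{Im}\,\la|}$ for $|\mathrm{Im}\,\la|\ge h_0$. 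The factor $2\sin(\la\pi/2)$ gives simple zeros $\la=2n$, producing $\be_{2m}=0$. Dividing the bracket $g(\la)$ by $\cos(\la\pi/2)\prod_{j<m}\sin\pi(\la-\al_j)$, the equation $g(\la)=0$ becomes $2\tan(\la\pi/2)=\sum_{j=1}^{m-1}\cot\pi(\la-\al_j)$; on the real line the difference of the two sides has strictly positive derivative, hence is strictly increasing and runs from $-\iy$ to $+\iy$ on each interval between consecutive poles, so it has exactly one simple real zero there. Per period there are $1+2(m-1)=2m-1$ such poles, pairwise distinct because $\al_j\not\equiv\al_k\pmod1$ (condition $(ii)$) and $\al_j\notin\mathbb Z$ (which follows from $(ii)$--$(iii)$); thus $g$ has exactly $2m-1$ simple real zeros per period, and since $g(\pm1)=\pm2\prod_{j<m}\sin\pi\al_j\ne0$ we may label them $\be_1,\dots,\be_{2m-1}\in(-1,1)$. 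Lying strictly between consecutive poles they are pairwise distinct and satisfy $\be_k\not\equiv\al_j\pmod1$, while $\be_{2m}=0\not\equiv\al_j\pmod1$ as well.

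\emph{Step 3: transfer to $\Delta$, and the main obstacle.} By periodicity and the boundedness of the multiplicities of the zeros of $\Delta_0$, there are $\de,c>0$ such that $|\Delta_0(\la)|\ge c$ on every circle $|\la-(2n+\be_k)|=\de$ and on every line $\mathrm{Re}\,\la=2n+1$, uniformly in $n$. Since the remainder is $O(|\la|^{-1}e^{m\pi|\mathrm{Im}\,\la|})$ it is $<c$ on these circles once $|n|$ is large, so Rouch\'e's theorem gives exactly as many zeros of $\la^{m-1}\Delta$ — hence of $\Delta$, as $\la\ne0$ there — inside each such circle as of $\Delta_0$; a second application on circles of radius $C/|n|$, using that each zero of $\Delta_0$ is simple (or of bounded multiplicity where several $\be_k$ coincide) with derivative bounded away from $0$, gives \eqref{asymptla}. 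To obtain $\mathcal I$, apply the argument principle to $\la^{m-1}\Delta$ and to $\Delta_0$ on the rectangles $\mathrm{Re}\,\la\in(-2N-1,2N+1)$, $|\mathrm{Im}\,\la|<h$ with $h>h_0$ fixed (the bounds above control $\Delta_0$, and the remainder, on all four sides): the two functions have the same number of zeros inside, namely $2m(2N+1)$, so $\Delta$ has $2m(2N+1)-(m-1)=4mN+m+1$ zeros there — exactly $\#\{(n,k)\in\mathcal I:|n|\le N\}$. Together with the $2m$ eigenvalues already located near each $2n$ with $|n|$ large, this forces precisely $m+1$ eigenvalues to lie near $\la=0$ and shows the eigenvalues may be enumerated by $\mathcal I$. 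The crux of the proof is the remainder estimate $\la^{m-1}\Delta(\la)-\Delta_0(\la)=O(|\la|^{-1}e^{m\pi|\mathrm{Im}\,\la|})$ with the \emph{correct} order — only $|\la|^{-1}$, and the \emph{same} exponential type $m\pi$ as $\Delta_0$ — which is exactly where the Wronskian identity of Step 1 (it lowers the apparent type of $\Delta$ from $(m+1)\pi$ to $m\pi$) and the hypothesis $p_j\in AC[0,\pi]$ are essential; without it, Rouch\'e could not be applied with constants uniform in $n$, nor on the horizontal sides of the large rectangles.
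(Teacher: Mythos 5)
Your proposal is correct and follows essentially the same route as the paper: the transformation-operator asymptotics for $S_j(\pi,\la)$, $S_j'(\pi,\la)$, $C_j(\pi,\la)$ give $\la^{m-1}\Delta(\la)$ up to an $O(|\la|^{-1}e^{m\pi|\mathrm{Im}\,\la|})$ remainder as a $2$-periodic model function (your $\Delta_0$ is exactly $-d(\la)$ from \eqref{defd}), whose real zeros per period are located through the same transcendental equation, since your $2\tan(\la\pi/2)=\sum_{j}\cot\pi(\la-\al_j)$ is identical to the paper's $\kappa_1(\la)=\kappa_2(\la)$, and the transfer to $\Delta$ is again by Rouch\'e. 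The only points of divergence are minor: you rule out non-real zeros of the model function via its sine-type lower bound and an argument-principle count, where the paper compares $d(\la)/p(\la)$ with $p(\la)=\prod_k\sin\bigl((\la-\be_k)\tfrac{\pi}{2}\bigr)$ via Phragm\'en--Lindel\"of and Liouville; and you explicitly carry out the rectangle count $2m(2N+1)-(m-1)=4mN+m+1$ that justifies the index set $\mathcal I$, a step the paper leaves implicit in its one-line appeal to Rouch\'e's theorem.
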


We will prove Theorem~\ref{thm:asympt} in Section~3.
Note that even if the vector-functions $p$ and $q$ are real-valued, the problem $L$ may have multiple and/or complex eigenvalues.
Condition $(iii)$ is nonrestrictive, since it can be easily achieved by a shift of the problem  as following:
$\la \mapsto \la + C$, $p_j \mapsto p_j + C$, $q_j \mapsto q_j - 2 C p_j - C^2$, $j = \overline{1, m}$, $C = -\al_m$.
Assumption $(ii)$ is imposed for simplicity. The case, when $(ii)$ is violated, requires some technical modifications. 

Let us introduce the characteristic function of the problem $L$.
For each fixed $j = \overline{1, m}$, let $S_j(x_j, \la)$ and $C_j(x_j, \la)$ be the solutions of equation \eqref{eqv}
under the initial conditions
\begin{equation} \label{ic}
    S_j(0, \la) = C_j'(0, \la) = 0, \quad S_j'(0, \la) = C_j(0, \la) = 1.
\end{equation}
Define the function $d_m(\la) := S_m'(\pi, \la) + C_m(\pi, \la) - 2$.
Using the standard methods (see, for example, \cite{FY01, BSh17}), one can show that
the eigenvalues of the boundary value problem $L$ coincide with the zeros of the analytic characteristic function
\begin{equation} \label{defDelta}
   \Delta(\la) := \sum_{j = 1}^{m-1} S_j'(\pi, \la) \prod_{\substack{k = 1 \\ k \ne j}}^m S_k(\pi, \la) + 
	d_m(\la) \prod_{k = 1}^{m-1} S_k(\pi, \la),
\end{equation}
counting with their multiplicities.

Let $\Lambda = \{ \la_{\theta} \}_{\theta \in \Theta}$ be a subset  of the spectrum of the problem $L$, possibly containing multiple values.
Here $\Theta$ is some countable set of indices. For example, one can consider $\Theta = \mathcal N$, where $\mathcal N$ is defined in \eqref{defLa}. 
Denote  $m_{\theta}$ the multiplicity of the value $\la_{\theta}$ in  $\Lambda$, i.e.
$m_{\theta}$ equals to the number of such indices $\tau \in \Theta$ that $\la_{\tau} = \la_{\theta}$. We assume that $m_{\theta}$
is less or equal to the multiplicity of the eigenvalue $\la_{\theta}$ in the spectrum.

Suppose $\Lambda = \{ \la_{\theta} \}_{\theta \in \Theta}$ satisfies the following assumption $(B)$.

\smallskip

$(B)$ The set $\Theta$ can be divided into two subsets: $\Theta = \Theta_1 \cup \Theta_2$
with the following properties. 

\smallskip 

(i) For each $\theta \in \Theta_1$, we have $S_j(\pi, \la_{\theta}) \ne 0$, $j = \overline{2, m}$.

\smallskip

(ii) For each  $\theta \in \Theta_2$ there exists an index $j_{\theta} \in \{ 2, \dots, m \}$, such that
$S_{j_{\theta}}(\pi, \la_{\theta}) = 0$ and $S_j(\pi, \la_{\theta}) \ne 0$ for all $j = \overline{2, m} \backslash \{ j_{\theta} \}$.
If $j_{\theta} = m$, then additionally $d_m(\la_{\theta}) \ne 0$.

\smallskip

Note that the values $S_j(\pi, \la)$ and $S_j'(\pi, \la)$ can not be simultaneously equal zero for every $\la$,
and $\Delta(\la_{\theta}) = 0$ for $\theta \in \Theta$.
Consequently, in view of \eqref{defDelta}, 
we have $S_1(\pi, \la_{\theta}) \ne 0$ for $\theta \in \Theta_1$ and
$S_1(\pi, \la_{\theta}) = 0$ for $\theta \in \Theta_2$.

We consider the following partial inverse problem on the graph $G$.

\begin{ip} \label{ip:1}
Given the functions $\{ p_j \}_{j = 2}^m$, $\{ q_j \}_{j = 2}^m$ and a subspectrum $\Lambda$, satisfying $(B)$,
find $p_1$ and $q_1$.
\end{ip}

Inverse Problem~\ref{ip:1} consists in recovering the pencil coefficients on a boundary edge.
Below we  provide conditions on the choice of a subspectrum $\Lambda$,  which is sufficient for Inverse Problem~\ref{ip:1} to be uniquely
solvable. Moreover we shall show a   constructive solution. 
Let
\begin{equation} \label{defLa}
   \Lambda = \{ \la_{nk} \}_{(n, k) \in \mathcal N}, \quad
   \mathcal N := \{(n, 1) \colon n \in \mathbb Z \backslash \{ 0 \} \} \cup
    \{ (n, k) \colon n \in \mathbb Z, \, k = \overline{2, 4} \},
\end{equation}
where the numbers $\la_{nk}$ obey the asymptotic relation \eqref{asymptla} with 
$\be_k$ satisfying the claim of Theorem~\ref{thm:asympt}.
Note that the numeration of the eigenvalues in Theorem~\ref{thm:asympt} is not unique. Any finite number of 
the first eigenvalues in the set $\{ \la_{nk} \}$ can stay in an arbitrary order, and the order of the numbers 
$\{ \be_k \}_{k =1}^{2m}$ is also not fixed. So, roughly speaking, we choose any four subsequences 
$\{ \la_{nk} \}_{n \in \mathbb Z}$, satisfying \eqref{asymptla} with distinct $\be_k$, and exclude one value.

Along with $L$ we consider another boundary value problem $\tilde L$ of the same form \eqref{eqv}-\eqref{kirch},
but with different coefficients $\tilde p = [\tilde p_j]_{j = 1}^m$ and $\tilde q = [\tilde q_j]_{j= 1}^m$. 
We agree that if a symbol $\ga$ denotes an object, related to $L$,
the symbol $\tilde \ga$ with tilde denotes the analogous object, related to $\tilde L$. Now we are ready
to formulate the uniqueness theorem for the solution of Inverse Problem~\ref{ip:1}.

\begin{thm} \label{thm:uniq1}
Suppose that for $j = \overline{2, m}$, we have $p_j = \tilde p_j$ in $AC[0,\pi]$ and $q_j = \tilde q_j$ 
in $L$. Denote by $\Lambda$ a subspectrum of $L$ and by $\tilde \Lambda$  a subspectrum of $\tilde L$. 
Suppose $\Lambda $ is of the form \eqref{defLa}, $\Lambda = \tilde \Lambda$, and assumptions $(A)$ and $(B)$ hold for the both pairs $(L, \Lambda)$ and $(\tilde L, \tilde \Lambda)$. 
Then $p_1 = \tilde p_1$ in $AC[0, \pi]$ and $q_1 = \tilde q_1$ in $L(0, \pi)$. Thus, under the above assumptions, the solution of Inverse Problem~\ref{ip:1}
is unique.
\end{thm}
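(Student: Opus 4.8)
The plan is to exploit the graph structure to reduce the claim to a Hochstadt--Lieberman-type relation on the single edge $e_1$, and then to recover $p_1 = \tilde p_1$, $q_1 = \tilde q_1$ from the vanishing of an entire function on $\Lambda$ by means of a Riesz-basis argument. Set $\hat p := p_1 - \tilde p_1$, $\hat q := q_1 - \tilde q_1$. First I would isolate $e_1$ in the characteristic function: since $p_j = \tilde p_j$, $q_j = \tilde q_j$ for $j = \overline{2, m}$, all quantities $S_j(\pi, \la)$, $S_j'(\pi, \la)$, $C_j(\pi, \la)$, $d_m(\la)$ with $j \ge 2$ equal their tilde-analogues, and separating in \eqref{defDelta} the terms carrying the factor $S_1(\pi, \la)$ from the single term carrying $S_1'(\pi, \la)$ gives
\begin{equation*}
 \Delta(\la) = S_1'(\pi, \la)\, \mathcal P(\la) + S_1(\pi, \la)\, \mathcal Q(\la), \qquad \mathcal P(\la) := \prod_{k = 2}^m S_k(\pi, \la),
\end{equation*}
with $\mathcal Q(\la)$ depending only on the known coefficients, and the same identity for $\tilde\Delta$ with $S_1, S_1'$ replaced by $\tilde S_1, \tilde S_1'$ and the \emph{same} $\mathcal P, \mathcal Q$. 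Multiplying \eqref{eqv} for $S_1$ by $\tilde S_1$ and that for $\tilde S_1$ by $S_1$, subtracting, and integrating over $(0, \pi)$ with the help of $S_1(0, \la) = \tilde S_1(0, \la) = 0$ gives
\begin{equation*}
 S_1(\pi, \la)\, \tilde S_1'(\pi, \la) - S_1'(\pi, \la)\, \tilde S_1(\pi, \la) = -I(\la), \qquad I(\la) := \int_0^\pi \big( \hat q(t) + 2 \la\, \hat p(t) \big)\, S_1(t, \la)\, \tilde S_1(t, \la) \, dt,
\end{equation*}
and combining this with the two factorizations above yields
\begin{equation*}
 \tilde S_1(\pi, \la)\, \Delta(\la) - S_1(\pi, \la)\, \tilde\Delta(\la) = \mathcal P(\la)\, I(\la), \qquad \tilde S_1'(\pi, \la)\, \Delta(\la) - S_1'(\pi, \la)\, \tilde\Delta(\la) = -\mathcal Q(\la)\, I(\la).
\end{equation*}

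Next I would bring in the subspectrum. For each $\la_\theta \in \Lambda = \tilde\Lambda$ one has $\Delta(\la_\theta) = \tilde\Delta(\la_\theta) = 0$ with multiplicity at least $m_\theta$ in each, so both left-hand sides above vanish at $\la_\theta$ to order $\ge m_\theta$. By $(B)$, $\mathcal P(\la_\theta) \ne 0$ for $\theta \in \Theta_1$, while for $\theta \in \Theta_2$ a direct computation gives $\mathcal Q(\la_\theta) \ne 0$ --- only the $j = j_\theta$ summand of $\mathcal Q$ survives, and it is nonzero because $S_{j_\theta}'(\pi, \la_\theta) \ne 0$ (Wronskian), $d_m(\la_\theta) \ne 0$ when $j_\theta = m$, and the remaining factors $S_j(\pi, \la_\theta)$, $j \ge 2$, do not vanish. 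Hence the entire function $I$ vanishes at each $\la_\theta$ to order $\ge m_\theta$; in particular $I(\la_{nk}) = 0$ (with multiplicities) for $(n, k) \in \mathcal N$. Using the transformation-operator representations of $S_1(\cdot, \la)$ and $\tilde S_1(\cdot, \la)$ for the pencil, I would then rewrite these conditions as the vanishing of the $\mathcal H$-inner products of the pair $(\hat q, \hat p)$ with an explicit system $\{ g_{nk} \}_{(n, k) \in \mathcal N}$ (with derivative vectors adjoined at multiple $\la_{nk}$) in a suitable Hilbert space $\mathcal H$ of pairs of functions on $(0, \pi)$, whose norm is weighted so as to balance the $\hat q$-contribution (of order $\la^{-2}$) against the $\hat p$-contribution (of order $\la^{-1}$), where $g_{nk}$ is built from $S_1(\cdot, \la_{nk}) \tilde S_1(\cdot, \la_{nk})$ and $\la_{nk}\, S_1(\cdot, \la_{nk}) \tilde S_1(\cdot, \la_{nk})$. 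It then suffices to prove that $\{ g_{nk} \}$ is complete in $\mathcal H$, for then $(\hat q, \hat p) = 0$, whence $\hat p \equiv 0$ (as $\hat p \in AC[0, \pi]$) and $\hat q = 0$ in $L(0, \pi)$, which is the assertion.

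The completeness of $\{ g_{nk} \}_{(n, k) \in \mathcal N}$ is the heart of the matter, and I expect it to be the main obstacle. The strategy is to show that this system is in fact a Riesz basis of $\mathcal H$. By the eigenvalue asymptotics \eqref{asymptla}, the leading parts of the $g_{nk}$ coincide, up to bounded and boundedly invertible operators on $\mathcal H$, with the exponential-type system attached to the frequencies $\{ 2 n + \be_k : n \in \mathbb Z, \ k = \overline{1, 4} \}$ with the single vector at $(n, k) = (0, 1)$ removed; the distinctness of $\be_1, \dots, \be_4$ excludes degeneracy, the relations $\be_k \not\equiv \al_j \pmod 1$ keep $\mathcal P(\la_{nk})$ and $\mathcal Q(\la_{nk})$ bounded away from zero along the relevant subsequences (making the perturbation uniform), and the single removed vector is precisely the correction making the density equal the one required to span two unknown functions on $(0, \pi)$ --- this is where the exact shape of $\mathcal N$ in \eqref{defLa} enters. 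The remainder $g_{nk} - (\text{leading part})$ is square-summable over $(n, k) \in \mathcal N$, so a Bari-type theorem both upgrades completeness to the Riesz-basis property and delivers the completeness used above. The algebraic reduction of the first two paragraphs and the concluding step are straightforward, so essentially all the technical weight of the proof lies in constructing the space $\mathcal H$ and verifying the Riesz-basis property of $\{ g_{nk} \}$.
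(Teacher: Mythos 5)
Your algebraic reduction is correct and genuinely different from the paper's: the splitting $\Delta = S_1'\mathcal P + S_1\mathcal Q$ coincides with \eqref{Delta1}, the Lagrange identity and the conclusion that $I(\la)$ vanishes on $\Lambda$ to the orders $m_\theta$ are sound (your claim $\mathcal Q(\la_\theta)\ne 0$ for $\theta\in\Theta_2$ is exactly the paper's own claim $A_1(\la_\theta)\ne0$), and working with the differences $\hat p,\hat q$ would even spare you the paper's separate determination of $\al_1$ modulo $2$. The paper instead takes as unknowns the kernels $K_1,N_1$ of \eqref{intS}--\eqref{intSp} in $\mathcal H=L_2(-\pi,\pi)\oplus L_2(-\pi,\pi)$, proves completeness of the explicit system $\mathscr H_1(\Lambda)$, recovers $S_1(\pi,\la)$, $S_1'(\pi,\la)$, and then cites \cite{BY12} for the interval problem. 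However, your proof has a genuine gap exactly at the step you yourself identify as carrying all the technical weight.

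The claimed Riesz-basis structure of $\{g_{nk}\}$ cannot hold. Each $g_{nk}$ has \emph{both} components proportional to the single function $S_1(\cdot,\la_{nk})\tilde S_1(\cdot,\la_{nk})$, with ratio $2\la_{nk}$; since this product is $O(|\la_{nk}|^{-2})$, after normalization the $\hat q$-component is $O(|\la_{nk}|^{-1})$ relative to the $\hat p$-component, and no admissible weighting of the pair space (the weight must be fixed, it cannot depend on $(n,k)$) can equalize them. Hence the normalized vectors converge to the subspace $\{0\}\oplus L_2(0,\pi)$: the ``leading part'' is degenerate, it is not boundedly-invertibly equivalent to a vector exponential system with frequencies $2n+\be_k$, the system is not quadratically close to any Riesz basis of the pair space, and the lower frame bound degenerates (compare the toy system $f_n\pm\eps_n e_n$ with $\eps_n\to0$, which is complete but never a Riesz basis). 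This is precisely the structural reason the paper does \emph{not} use the product-of-solutions reduction: in its formulation the two components of $v(t,\la)$ in \eqref{defV} are $S_1'(\pi,\la)$ and $-\la S_1(\pi,\la)$, both $O(1)$, so the limit system \eqref{defv0} is nondegenerate. Moreover, a Bari-type theorem never ``delivers the completeness used above'': quadratic closeness to a Riesz basis does not imply completeness, which must be established independently -- the paper does this in Lemma~\ref{lem:complete} by a Phragm\'en--Lindel\"of/Liouville argument -- and you give no such argument. Your scheme can be salvaged, but only by replacing the basis claim with an entire-function argument applied to $F(\la):=\la I(\la)$ (exponential type $2\pi$, bounded on $\mathbb R$, vanishing on $\Lambda$ and automatically at $\la=0$, which plays the role of the normalization \eqref{scal4}): comparing $F$ with the product $D(\la)$ of \eqref{defD}, killing the constant via the real-axis behaviour, one gets $I\equiv0$, hence equality of the Weyl functions $S_1'(\pi,\la)/S_1(\pi,\la)$ and $\tilde S_1'(\pi,\la)/\tilde S_1(\pi,\la)$, and then \cite{BY12} finishes -- but that is essentially the paper's completeness lemma in disguise, not the Riesz-basis mechanism you proposed.
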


We shall  prove Theorem~\ref{thm:uniq1} in Section~4 and develop a constructive algorithm for solving Inverse Problem~\ref{ip:1}
in Section~5.

We are also interested in recovering the coefficients $p_m$ and $q_m$ of the considered pencil on the loop $e_m$.
For that purpose, together with a subspectrum, we need some additional data associated with the periodic inverse 
problem for the pencil on the loop. That problem has been studied in  \cite{Yur12-period}. Let us briefly describe
its results.
Let $\{ \nu_n \}_{n \in \mathbb Z}$ be the zeros of the analytic function $S_m(\pi, \la)$.
Set $Q(\la) := C_m(\pi, \la) - S_m'(\pi, \la)$ and
$$
    \om_n := \left\{\begin{array}{ll}
		0, & \quad Q(\nu_n) = 0, \\
		+1, & \quad Q(\nu_n) \ne 0, \: \arg Q(\nu_n) \in [0, \pi), \\
		-1, & \quad Q(\nu_n) \ne 0, \: \arg Q(\nu_n) \in [\pi, 2\pi).
	     \end{array}\right. 		
$$
The following condition   

\smallskip

$(C)$ $\om_n \ne 0$ for all $n \in \mathbb Z,$

\smallskip
\noindent will play a crucial role in the remaining of this section. At first we have 

\begin{lem} \label{lem:om}
Assumption $(C)$ implies that the functions $S_m(\pi, \la)$ and $d_m(\la)$ do not have common zeros.
\end{lem}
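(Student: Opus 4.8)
The plan is to reduce the statement to an elementary algebraic identity coming from the Wronskian of the fundamental system $C_m(\cdot, \la)$, $S_m(\cdot, \la)$ of equation \eqref{eqv} on the loop. First I would note that, since \eqref{eqv} contains no first-order term, the Wronskian
$$
W(x_m, \la) := C_m(x_m, \la) S_m'(x_m, \la) - C_m'(x_m, \la) S_m(x_m, \la)
$$
is independent of $x_m$: differentiating and substituting \eqref{eqv} for both $C_m$ and $S_m$ gives $\partial_{x_m} W \equiv 0$, and evaluating at $x_m = 0$ with the initial conditions \eqref{ic} yields $W(x_m, \la) \equiv 1$ for every $\la$.

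Next I would argue by contradiction. Assume $\la_0$ is a common zero of $S_m(\pi, \cdot)$ and $d_m$. Since $\{ \nu_n \}_{n \in \mathbb Z}$ are exactly the zeros of $S_m(\pi, \cdot)$, we have $\la_0 = \nu_n$ for some $n \in \mathbb Z$. Put $a := S_m'(\pi, \la_0)$ and $b := C_m(\pi, \la_0)$. Evaluating $W(\pi, \la_0) = 1$ and using $S_m(\pi, \la_0) = 0$ gives $ab = 1$, while the assumption $d_m(\la_0) = 0$ together with $d_m(\la) = S_m'(\pi, \la) + C_m(\pi, \la) - 2$ gives $a + b = 2$. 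Hence $a$ and $b$ are both roots of $t^2 - 2t + 1 = (t-1)^2$, so $a = b = 1$, and therefore $Q(\la_0) = C_m(\pi, \la_0) - S_m'(\pi, \la_0) = b - a = 0$, i.e. $Q(\nu_n) = 0$. By the definition of $\om_n$ this forces $\om_n = 0$, contradicting assumption $(C)$. Thus $S_m(\pi, \cdot)$ and $d_m$ have no common zero.

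I do not expect any genuine obstacle in this argument; it is short once the Wronskian normalization is set up. The only points that need a little care are the verification that $W \equiv 1$ identically in $\la$ (which uses the absence of a first-derivative term in \eqref{eqv} and the normalization \eqref{ic}) and the observation that the only candidates for a common zero are the points $\nu_n$, so that condition $(C)$, stated through $Q(\nu_n) \ne 0$, indeed rules out all of them. No multiplicity considerations enter, since only the pointwise values of $S_m(\pi, \cdot)$, $C_m(\pi, \cdot)$ and $S_m'(\pi, \cdot)$ at $\la_0$ are used.
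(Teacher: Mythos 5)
Your proposal is correct and follows essentially the same route as the paper: the Wronskian identity $C_m(\pi,\la)S_m'(\pi,\la) - C_m'(\pi,\la)S_m(\pi,\la) \equiv 1$ evaluated at a common zero $\nu_n$, combined with $d_m(\nu_n)=0$, forces $C_m(\pi,\nu_n)=S_m'(\pi,\nu_n)=1$ and hence $Q(\nu_n)=0$, contradicting $(C)$. The only cosmetic difference is that you phrase the final step via the quadratic $(t-1)^2$, while the paper states the conclusion directly.
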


This lemma will be proved in Section 6.

Put $\Omega := \{ \om_n \}_{n \in \mathbb Z}$.
The following inverse problem has been studied in \cite{Yur12-period}.

\begin{ip} \label{ip:period}
Given $d_m(\la)$, $S_m(\la)$ and $\Omega$, construct $p_m$ and $q_m$.
\end{ip}

The results of \cite{Yur12-period} imply the following uniqueness theorem and a constructive algorithm
for solving Inverse Problem~\ref{ip:period}.

\begin{thm} \label{thm:period}
Suppose condition  (C) holds, then  the specification of $d_m(\la)$, $S_m(\pi, \la)$ and $\Omega$ uniquely determines functions $p_m$ and $q_m.$
\end{thm}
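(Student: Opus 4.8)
The plan is to deduce Theorem~\ref{thm:period} from the inverse problem theory developed in \cite{Yur12-period} by showing that the prescribed data $d_m(\la)$, $S_m(\pi,\la)$ and $\Omega$ pin down the whole transfer (monodromy) matrix of the pencil \eqref{eqv} on the loop $e_m$, namely the four entire functions $C_m(\pi,\la)$, $C_m'(\pi,\la)$, $S_m(\pi,\la)$, $S_m'(\pi,\la)$, and then to invoke the fact that this matrix determines $p_m$ and $q_m$ uniquely via the standard inverse theory for pencils on a finite interval.

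First I would record the algebraic relations linking these functions. Since the equation \eqref{eqv} contains no first-order term, the Wronskian $C_m(\pi,\la)S_m'(\pi,\la)-C_m'(\pi,\la)S_m(\pi,\la)$ equals $1$ for all $\la$. By the definition of $d_m$, we also have $C_m(\pi,\la)+S_m'(\pi,\la)=d_m(\la)+2$, which is known, while $C_m(\pi,\la)$ and $S_m'(\pi,\la)$ differ by $Q(\la)=C_m(\pi,\la)-S_m'(\pi,\la)$. Consequently, once $Q(\la)$ is reconstructed we recover
\[
C_m(\pi,\la)=\tfrac12\big(d_m(\la)+2+Q(\la)\big),\qquad
S_m'(\pi,\la)=\tfrac12\big(d_m(\la)+2-Q(\la)\big),
\]
and then $C_m'(\pi,\la)=\big(C_m(\pi,\la)S_m'(\pi,\la)-1\big)/S_m(\pi,\la)$, which is entire because, by the Wronskian identity, the numerator vanishes at every zero of $S_m(\pi,\la)$. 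Thus the whole problem reduces to reconstructing the single function $Q(\la)$.

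Next, I would evaluate at the zeros $\{\nu_n\}$ of $S_m(\pi,\la)$, which are read off from the given function together with their orders. At $\la=\nu_n$ the Wronskian gives $C_m(\pi,\nu_n)S_m'(\pi,\nu_n)=1$; combining this with $C_m(\pi,\nu_n)+S_m'(\pi,\nu_n)=d_m(\nu_n)+2$ yields
\[
Q(\nu_n)^2=(d_m(\nu_n)+2)^2-4,
\]
a value expressed entirely through known data, so $Q(\nu_n)$ is determined up to a sign. Condition $(C)$---equivalently, Lemma~\ref{lem:om}---guarantees $Q(\nu_n)\ne 0$, so the two candidate square roots are genuinely distinct; exactly one of them has argument in $[0,\pi)$, and the datum $\om_n\in\{+1,-1\}$ selects the correct value $Q(\nu_n)$ without ambiguity. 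Hence all the interpolation data $\{(\nu_n,Q(\nu_n))\}$, with derivative data at any multiple node obtained by differentiating the identities above, are fixed by $d_m$, $S_m(\pi,\cdot)$ and $\Omega$. Using $S_m(\pi,\la)$ as the node-generating function, one then restores $Q(\la)$ through a Lagrange-type interpolation series: the difference between $Q(\la)$ and the series is entire, vanishes at every $\nu_n$ to the proper order, hence is divisible by $S_m(\pi,\la)$, and the known asymptotics of $Q(\la)$ and $S_m(\pi,\la)$ (from the transformation-operator representations for the pencil) force the quotient to vanish identically. This is exactly the reconstruction scheme of \cite{Yur12-period}; with $Q$, and hence the full transfer matrix, in hand, the coefficients $p_m$ and $q_m$ are uniquely determined by the classical finite-interval inverse theory (Gelfand--Levitan / method of spectral mappings), completing the proof.

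The hard part is this last reconstruction step: establishing convergence of the interpolation series for $Q$ and, above all, ruling out the correcting factor by a precise growth comparison with $S_m(\pi,\la)$, which hinges on sharp asymptotics of $Q(\la)$ and of the nodes $\nu_n$ and on a careful treatment of possible multiple zeros of $S_m(\pi,\la)$. All of this is the technical core already carried out in \cite{Yur12-period}, so here it only has to be quoted and matched to the data $(d_m,S_m(\pi,\cdot),\Omega)$, with Lemma~\ref{lem:om} ensuring the internal consistency of that data.
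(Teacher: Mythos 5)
Your proposal is sound, and it is in fact more detailed than what the paper does: the paper offers no proof of Theorem~\ref{thm:period} at all, simply stating that it follows from the results of \cite{Yur12-period} on the quasi-periodic pencil. Your reduction is correct and matches the mechanism behind that citation: the Wronskian identity $C_m(\pi,\la)S_m'(\pi,\la)-C_m'(\pi,\la)S_m(\pi,\la)\equiv 1$ gives $C_m(\pi,\nu_n)S_m'(\pi,\nu_n)=1$ at the zeros $\nu_n$ of $S_m(\pi,\cdot)$, hence $Q(\nu_n)^2=(d_m(\nu_n)+2)^2-4$ is known; condition $(C)$ gives $Q(\nu_n)\ne 0$ directly (you do not even need Lemma~\ref{lem:om} for this, since $(C)$ is by definition $Q(\nu_n)\ne0$), so the two square roots have arguments differing by $\pi$ and $\om_n$ selects the right one; your observation that at a zero of multiplicity $k$ the derivatives of $Q^2$ up to order $k-1$ are still known (the unknown term $C_m'(\pi,\la)S_m(\pi,\la)$ contributes only factors vanishing to order $k$) correctly supplies the Hermite data at multiple nodes. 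Once $Q$ is recovered, knowing $S_m(\pi,\la)$ and $S_m'(\pi,\la)=\tfrac12(d_m(\la)+2-Q(\la))$ already gives the Weyl function $S_m'(\pi,\la)/S_m(\pi,\la)$, so the final step is better referenced to \cite{BY12} (uniqueness from the Weyl function for pencils, as used for edge $e_1$) than to Gelfand--Levitan; this is a cosmetic point. The one genuinely analytic step --- that the Hermite interpolation data at the zeros of $S_m(\pi,\cdot)$, together with the growth/asymptotic properties of $Q$, determine $Q$ uniquely --- you delegate to \cite{Yur12-period}, which is exactly the same reliance the authors make, so your write-up is consistent with the paper's level of rigor while exposing the structure the paper leaves implicit.
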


Let us return to our  problem.  We impose the following assumption $(D)$ on the subspectrum
$\Lambda = \{ \la_{\theta} \}_{\theta \in \Theta}$.

\smallskip

$(D)$ The set $\Theta$ can be divided into two subsets: $\Theta = \Theta_1 \cup \Theta_2$
with the following properties. 

\smallskip 

(i) For each $\theta \in \Theta_1$, we have $S_j(\pi, \la_{\theta}) \ne 0$, $j = \overline{1, m-1}$.

\smallskip

(ii) For each $\theta \in \Theta_2$ there exists an index $j_{\theta} \in \{ 1, \dots, m -1 \}$, such that
$S_{j_{\theta}}(\pi, \la_{\theta}) = 0$ and $S_j(\pi, \la_{\theta}) \ne 0$ for all $j = \overline{1, m-1} \backslash \{ j_{\theta} \}.$

\smallskip

Recall that $\Delta(\la_{\theta}) = 0$ for $\theta \in \Theta$, and $S_j(\pi, \la)$ do not have common zeros with $S_j'(\pi, \la)$ for all $j = \overline{1, m}$.
Consequently, in view of \eqref{defDelta}, assumption $(C)$ and Lemma~\ref{lem:om}, 
we have $S_m(\pi, \la_{\theta}) \ne 0$ for $\theta \in \Theta_1$ and $S_m(\pi, \la_{\theta}) = 0$ for $\theta \in \Theta_2$.

Next, we consider the following partial inverse problem of recovering the coefficients of the pencil on the loop.

\begin{ip} \label{ip:m}
Let the functions $\{ p_j \}_{j = 1}^{m-1}$, $\{ q_j \}_{j = 1}^{m-1}$,  $\Omega$ and a subspectrum $\Lambda$ 
 which satisfies  assumptions $(C)$ and $(D)$ be given. Find $p_m$ and $q_m$.
\end{ip}

In Section~5, we will reduce Inverse Problem~\ref{ip:m} to the periodic Inverse Problem~\ref{ip:period} 
and prove the following uniqueness theorem.

\begin{thm} \label{thm:uniqm}
Suppose  we have $p_j = \tilde p_j$ in $AC[0,\pi]$ and $q_j = \tilde q_j$ 
in $L(0, \pi)$ for $j = \overline{1, m-1}$,  the subspectrum $\Lambda$ is of the form \eqref{defLa}, $\Lambda = \tilde \Lambda$, $\Omega = \tilde \Omega$
and assumptions $(A)$, $(C)$ and $(D)$  are fulfilled   for  $(L, \Lambda)$ and  $(\tilde L, \tilde \Lambda),$
then $p_m = \tilde p_m$ in $AC[0, \pi]$
and $q_m = \tilde q_m$ in $L(0, \pi)$. Thus, under the above assumptions, the solution of Inverse Problem~\ref{ip:m}
is unique.
\end{thm}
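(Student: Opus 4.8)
The plan is to reduce Inverse Problem~\ref{ip:m} to the periodic Inverse Problem~\ref{ip:period}. Since $p_j = \tilde p_j$ and $q_j = \tilde q_j$ for $j = \overline{1, m-1}$, the corresponding solutions of \eqref{eqv} coincide, $S_j(x_j, \la) = \tilde S_j(x_j, \la)$ and $C_j(x_j, \la) = \tilde C_j(x_j, \la)$, $j = \overline{1, m-1}$. Pulling the common factor $S_m(\pi, \la)$ out of the sum over $j = \overline{1, m-1}$ in \eqref{defDelta}, we obtain
\begin{equation*}
\Delta(\la) = P(\la)\, S_m(\pi, \la) + R(\la)\, d_m(\la),
\end{equation*}
where
\begin{equation*}
P(\la) := \sum_{j = 1}^{m-1} S_j'(\pi, \la) \prod_{\substack{k = 1 \\ k \ne j}}^{m-1} S_k(\pi, \la), \qquad
R(\la) := \prod_{k = 1}^{m-1} S_k(\pi, \la)
\end{equation*}
are entire functions independent of $p_m, q_m$; hence $P = \tilde P$, $R = \tilde R$ and $\tilde\Delta(\la) = P(\la)\tilde S_m(\pi, \la) + R(\la)\tilde d_m(\la)$. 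By Lemma~\ref{lem:om}, assumption $(C)$ forces $S_m(\pi, \cdot)$ and $d_m$ (respectively $\tilde S_m(\pi, \cdot)$ and $\tilde d_m$) to have no common zeros, and by Theorem~\ref{thm:period} together with $\Omega = \tilde\Omega$ it is enough to prove that $u := S_m(\pi, \cdot) - \tilde S_m(\pi, \cdot)$ and $v := d_m - \tilde d_m$ both vanish identically.

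First I would collect the properties of $u$ and $v$. From the asymptotics of $S_m(\pi, \la)$, $C_m(\pi, \la)$ and $d_m(\la)$ obtained by the techniques of Section~3 (with $\al_m = 0$ by $(A)(iii)$, so that the principal parts $\tfrac{\sin \pi \la}{\la}$ of $S_m(\pi,\la)$ and $2\cos\pi\la - 2$ of $d_m(\la)$ are independent of $p_m, q_m$), the functions $u$ and $v$ are entire of exponential type at most $\pi$ and, the principal parts cancelling, after multiplication by an appropriate power of $\la$ they become Fourier transforms of functions $f, g \in L_2(-\pi, \pi)$. On the other hand $\Lambda = \tilde\Lambda$ means each $\la_\theta$ is a zero of multiplicity $\ge m_\theta$ of both $\Delta$ and $\tilde\Delta$, so $U := \Delta - \tilde\Delta = P u + R v$ vanishes at $\la_\theta$ together with its first $m_\theta - 1$ derivatives. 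Splitting $\Theta = \Theta_1 \cup \Theta_2$ as in $(D)$: for $\theta \in \Theta_2$ exactly one factor $S_{j_\theta}(\pi, \la_\theta)$ with $j_\theta \le m - 1$ vanishes, hence $R(\la_\theta) = 0$, while $P(\la_\theta) = S_{j_\theta}'(\pi, \la_\theta)\prod_{k \ne j_\theta} S_k(\pi, \la_\theta) \ne 0$ (only the $j = j_\theta$ summand survives, and $S_{j_\theta}, S_{j_\theta}'$ have no common zeros), so that $u(\la_\theta) = 0$; for $\theta \in \Theta_1$ one has $R(\la_\theta) \ne 0$ and the relation reads $v(\la_\theta) = -\dfrac{P(\la_\theta)}{R(\la_\theta)}\, u(\la_\theta)$, with analogous identities for derivatives at the finitely many multiple points.

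The heart of the proof is a completeness/Riesz-basis argument in the Hilbert space constructed in Sections~4--5. Translated through the integral representations of $u$ and $v$, the relations above state that the pair $(f, g)$ is orthogonal in this space to an explicit system of functions indexed by $\Theta$ and built from $e^{i\la_\theta t}$ with the weights $P(\la_\theta), R(\la_\theta)$ (and their derivatives at multiple $\la_\theta$). Using the asymptotics \eqref{asymptla}, the distinctness of the $\be_k$ and the conditions $\be_k \not\equiv \al_j \pmod 1$, together with the special shape of the index set $\mathcal N$ in \eqref{defLa} (four two-sided subsequences with one term deleted --- the amount matching the two unknown functions $S_m(\pi, \cdot)$ and $d_m$ beyond their known principal parts), one proves that this system is complete, in fact a Riesz basis after a finite-rank correction accounting for the finitely many low-index eigenvalues and the multiplicities. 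Consequently $f \equiv g \equiv 0$, i.e. $u \equiv v \equiv 0$, so $S_m(\pi, \cdot) = \tilde S_m(\pi, \cdot)$ and $d_m = \tilde d_m$. Feeding this and $\Omega = \tilde\Omega$ into Theorem~\ref{thm:period} yields $p_m = \tilde p_m$ in $AC[0, \pi]$ and $q_m = \tilde q_m$ in $L(0, \pi)$.

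The main obstacle is precisely the Riesz-basis property of that functional system. It demands sharp asymptotics of $S_m(\pi, \la)$, $C_m(\pi, \la)$, $d_m(\la)$ and of $\Lambda$ (via Theorem~\ref{thm:asympt}); a way of decoupling $u$ and $v$, which remain linked over $\Theta_1$ through the quotient $P/R$ --- here assumption $(C)$, Lemma~\ref{lem:om} and the non-vanishing of $P, R$ at the relevant points are essential, and one will probably argue in two stages, first deducing $u \equiv 0$ from the $\Theta_2$-conditions together with the asymptotics, then $v \equiv 0$; and a careful treatment of the multiple eigenvalues and of the finitely many eigenvalues not yet governed by the asymptotics, by a standard finite-dimensional perturbation of a Riesz basis. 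I expect the decoupling step to be the delicate one, and it should mirror the corresponding step in the proof of Theorem~\ref{thm:uniq1} in Section~4, with the loop data $(S_m(\pi, \cdot), d_m)$ in place of the boundary-edge data $(S_1(\pi, \cdot), S_1'(\pi, \cdot))$.
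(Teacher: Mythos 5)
Your proposal is correct and follows essentially the same route as the paper: the decomposition $\Delta = A_m S_m(\pi,\cdot) + B_m d_m$ (your $P, R$), the translation via transformation-operator kernels into orthogonality of the difference pair to the weighted exponential system $\mathscr H_m(\Lambda)$ in $\mathcal H$, the completeness argument modelled on Lemma~\ref{lem:complete} using the structure of \eqref{defLa}, and the final reduction to Theorem~\ref{thm:period} through assumption $(C)$, Lemma~\ref{lem:om} and $\Omega = \tilde\Omega$. The paper merely packages the same steps as the main equations \eqref{mainm} plus the abstract Theorem~\ref{thm:uniqm-abs}, and likewise leaves the completeness verification as an analogue of Lemma~\ref{lem:complete}.
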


\bigskip

{\large \bf 3. Asymptotic Behavior of the Eigenvalues}

\bigskip

This section is devoted to the proof of Theorem~\ref{thm:asympt}. 
Using the transformation operators \cite{GG81, Pro13}, one can obtain the relations
\begin{align}  \label{intS}
S_j(\pi, \la) & = \frac{\sin(\la - \al_j)\pi}{\la} + \frac{1}{\la} \int_{-\pi}^{\pi} K_j(t) \exp(i \la t) \, dt, \\ \label{intSp}
S_j'(\pi, \la) & = \cos (\la - \al_j) \pi + \int_{-\pi}^{\pi} N_j(t) \exp(i \la t) \, dt, \\ \label{intC}
C_j(\pi, \la) & = \cos (\la - \al_j) \pi + \int_{-\pi}^{\pi} L_j(t) \exp(i \la t) \, dt, 
\end{align} 
where $K_j$, $N_j$ and $L_j$ are some  continuous functions on $[-\pi, \pi]$, $j = \overline{1, m}$.
In particular, relations \eqref{intS}-\eqref{intC} yield the asymptotic formulas
\begin{align} \label{asymptS} 
S_j(\pi, \la) & = \frac{\sin(\la - \al_j)\pi}{\la} + O\left( |\la|^{-1} \exp(\pi|\mbox{Im}\,\la|) \right), \\ \label{asymptSp}
S_j'(\pi, \la) & = \cos (\la - \al_j) \pi + O\left( \exp(\pi|\mbox{Im}\,\la|) \right), \\ \nonumber
C_j(\pi, \la) & = \cos (\la - \al_j) \pi + O\left(\exp(\pi|\mbox{Im}\,\la|) \right), 
\end{align} 
as $|\la| \to \iy$. Substituting the formulas \eqref{asymptS} and \eqref{asymptSp} into \eqref{defDelta}, we get 
\begin{equation} \label{asymptDelta}
    \Delta(\la) = \la^{1-m} \Big(d(\la) + O\left(|\la|^{-1} \exp(m \pi |\mbox{Im}\,\la|)\right)\Big), \quad |\la| \to \iy,
\end{equation} 
where
\begin{equation} \label{defd}
  d(\la) = \sum_{j = 1}^{m-1} \cos (\la - \al_j) \pi \left( \prod_{\substack{k = 1 \\ k \ne j}}^{m-1} \sin (\la - \al_k) \pi \right) \sin \la \pi
+ 2 (\cos \la \pi - 1) \prod_{k = 1}^{m-1} \sin (\la - \al_k) \pi.
\end{equation}

\begin{lem} \label{lem:d1}
Under assumption $(A)$, the set of zeros of the function $d(\la)$ is described as follows:
\begin{equation} \label{zerosd}
 2 n + \be_k, \quad n \in \mathbb Z, \quad k = \overline{1, 2m},
\end{equation}
where $\be_k \in (-1, 1)$ are distinct real numbers for $k = \overline{1, 2m-1}$, and $\be_{2m} = 0$.
\end{lem}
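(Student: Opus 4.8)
The plan is to analyze $d(\la)$ by grouping terms and factoring out a common product, then locate the zeros via a Rouch\'e-type / continuity argument. First I would rewrite $d(\la)$ in the form
\[
d(\la) = \left(\prod_{k=1}^{m-1}\sin(\la-\al_k)\pi\right)\left(\sum_{j=1}^{m-1}\cot(\la-\al_j)\pi \cdot \sin\la\pi + 2(\cos\la\pi - 1)\right),
\]
valid away from the zeros of the factor in front, and treat the two sources of zeros separately. The first factor $\prod_{k=1}^{m-1}\sin(\la-\al_k)\pi$ vanishes precisely at $\la = n + \al_k$, $n\in\mathbb Z$, $k=\overline{1,m-1}$; by assumption $(ii)$ these points are distinct modulo $1$ for distinct $k$, and by $(iii)$ none of them coincides with the integers (which come from the $\sin\la\pi$ and $\cos\la\pi - 1$ contributions with $\al_m = 0$). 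The second factor, call it $g(\la)$, has simple poles exactly at these points, so the product $d(\la)$ is entire and its zeros split into the zeros coming from $g$ and the points where $g$ has a pole cancelled by a zero of the product — but a short local computation shows no such cancellation occurs under $(A)$, so I can enumerate: $\la = n + \al_k$ contributes $2(m-1)$ arithmetic progressions, and $\la = 2n$ (a double zero of $(\cos\la\pi-1)$, partially) must be checked as well.

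Next I would count zeros per period. Since $d(\la)$ is $2$-periodic (each $\sin(\la-\al_k)\pi$ is $2$-antiperiodic and there are $m-1$ of them times $\sin\la\pi$, giving $m$ factors, times $(\cos\la\pi - 1)$ which is $2$-periodic; a parity check fixes the overall $2$-periodicity — here $(iii)$ matters), and its exponential type in each of $\pm\mathrm{Im}\,\la$ is $m\pi$, a standard argument (integrating $d'/d$ over the boundary of a strip $\{\,2n \le \mathrm{Re}\,\la < 2n+2\,\}$ pushed to $\pm i\infty$) shows that $d$ has exactly $2m$ zeros, with multiplicity, in each such strip. I would then exhibit $2m-1$ of them explicitly or asymptotically: the $2(m-1)$ points $\la = n+\al_k$ (contributing $\be_k = \al_k$ and $\be_{k'} = \al_k - 1$ suitably reindexed into $(-1,1)$), plus one more zero near an integer coming from the factor $(\cos\la\pi - 1)$ and from $\sin\la\pi$; the leftover zero is forced to sit at $\la = 2n$ exactly — this is $\be_{2m} = 0$ — because $d(\la)$ vanishes at $\la = 2n$ (every term has either $\sin\la\pi = 0$ or $\cos\la\pi - 1 = 0$ there) and a derivative computation shows this zero is simple under $(A)$, leaving no room for the other $2m-1$ zeros to be anything but the listed ones.

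The distinctness of $\be_1,\dots,\be_{2m-1}$ reduces to: the numbers $\{\al_k \bmod 1\}_{k=1}^{m-1}$ are pairwise distinct (assumption $(ii)$ with $\al_m=0$), none equals $0 \bmod 1$ (assumption $(iii)$ together with $(ii)$, since $\al_m=0$ forces $\al_k\not\equiv 0$), and each $\al_k$ generates two representatives in $(-1,1)$, namely $\al_k$ and $\al_k - \mathrm{sgn}(\al_k)$, which are distinct from each other and, across $k$, from one another — that is $2(m-1)$ distinct values, and together with the single integer-type zero $\be_{2m}=0$ this accounts for $2m-1$ distinct $\be_k$'s plus one more, which we also check avoids the previous $2m-1$. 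I expect the main obstacle to be the bookkeeping that verifies no zero of $\prod_{k=1}^{m-1}\sin(\la-\al_k)\pi$ is \emph{cancelled} (i.e.\ that $g$ genuinely has a pole, not a removable singularity, at $\la = n+\al_k$) and, dually, that the integer-type zeros are not double beyond what is claimed — both hinge delicately on $(A)$, especially on $\al_m = 0$ and on $\al_j\not\equiv\al_k\pmod 1$, and a careless grouping of terms can hide a spurious cancellation. Once these local nondegeneracy checks are in place, the global zero count from the exponential-type argument closes the proof.
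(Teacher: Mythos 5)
There is a fatal error in your identification of the zeros. You claim that the points $\la = n + \al_k$, $k = \overline{1, m-1}$, are zeros of $d(\la)$, contributing $2(m-1)$ of the progressions, so that $\be_k$ would essentially be $\al_k$ and $\al_k - 1$. This is false, and it contradicts the assertion of Theorem~\ref{thm:asympt} that $\be_k \not\equiv \al_j \pmod 1$. A direct check from \eqref{defd}: at $\la = \al_j$ every term of the sum with index $j' \ne j$ and the last term contain the factor $\sin(\la - \al_j)\pi = 0$, while the term with index $j$ equals $\cos 0 \cdot \bigl(\prod_{k \ne j} \sin(\al_j - \al_k)\pi\bigr) \sin \al_j \pi \ne 0$ by $(ii)$ and $(iii)$; hence $d(\al_j) \ne 0$. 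In your own factorization the same thing is visible: $g(\la)$ has a genuine simple pole at $\la = n + \al_k$ (from $\cot(\la - \al_k)\pi$) and the prefactor $\prod_{k}\sin(\la-\al_k)\pi$ has a simple zero there, so their product is finite and \emph{nonzero} at these points --- the pole does not create a zero of $d$, it destroys one. Consequently your enumeration of $2m-1$ zeros per period, and the distinctness argument built on $\{\al_k, \al_k - \mathrm{sgn}(\al_k)\}$, collapse; only the zero $\be_{2m} = 0$ at even integers is correctly identified.

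The correct use of your factorization is the one the paper makes: away from the points $n + \al_k$ (which are not zeros) and from $\la = 0$, the zeros of $d$ in a period are exactly the solutions of $\sum_{j=1}^{m-1}\cot(\la - \al_j)\pi = 2(1 - \cos\la\pi)/\sin\la\pi$. The left-hand side is strictly decreasing on each of the $2m-1$ intervals of $(-1,1)$ determined by the points $\al_j$ and $\al_j - 1$, with limits $\mp\iy$ at the endpoints, while the right-hand side strictly increases on $(-1,1)$ from $-\iy$ to $+\iy$; this gives exactly one root $\be_k$ in each interval, automatically distinct and not congruent to any $\al_j$. Your global zero count (exactly $2m$ zeros per period from $2$-periodicity and exponential type $m\pi$) is sound in spirit and could replace the paper's Phragm\'en--Lindel\"of/Liouville comparison of $d(\la)$ with $p(\la) = \prod_{k=1}^{2m}\sin\bigl((\la - \be_k)\tfrac{\pi}{2}\bigr)$ for excluding further zeros, but it cannot repair the misidentification of where those $2m$ zeros actually sit.
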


\begin{proof}
Clearly, the function $d(\la)$ is $2$-periodic, so it is sufficient to study its zeros in $[-1, 1)$. Without loss of generality,
we assume that $0 < \al_1 < \al_2 < \dots < \al_{m-1} < 1$. Note that $d(\al_j) \ne 0$  for $j = \overline{1, m-1}$, and $d(0) = 0$. Hence
the zeros of $d(\la)$ in $[-1, 1)$, except $\la = 0$, coincide with the roots of the equation
$\kappa_1(\la) = \kappa_2(\la)$, where
\begin{equation*}
   \kappa_1(\la) := \sum_{j = 1}^{m-1} \cot (\la - \al_j) \pi, \quad
\kappa_2(\la) = \frac{2(1 - \cos \la \pi)}{\sin \la \pi}.
\end{equation*}
One can easily check that the function $\kappa_1(\la)$ is strictly decreasing on the intervals
\begin{equation} \label{intervals}
   (-1, \al_1 - 1), \quad (\al_j - 1, \al_{j+1}-1), \quad (\al_{m-1}-1, \al_1), \quad (\al_j, \al_{j+1}), \quad 
  (\al_{m-1}, 1), \quad j = \overline{1, m-2},
\end{equation}
and
$$
    \kappa_1(\al_j - 1 - 0) = \kappa_1(\al_j - 0) = -\iy, \quad \kappa_1(\al_j - 1 + 0) = \kappa_1(\al_j + 0) = +\iy;
$$
the function $\kappa_2(\la)$ strictly increases on $(-1, 1)$, $\kappa_2(-1 + 0) = -\iy$ and $\kappa_2(1 - 0) = +\iy$.
Consequently, the equation $\kappa_1(\la) = \kappa_2(\la)$ has exactly one root $\be_k$ in each of the $2m-1$ intervals \eqref{intervals}.
Adding the zero $\la = 0$ and using $2$-periodicity of $d(\la)$, we show that the function $d(\la)$ has zeros in the form \eqref{zerosd}.
Let us prove that there exist no other zeros.

Define the function
\begin{equation} \label{defp}
   p(\la) = \prod_{k = 1}^{2 m} \sin\left( (\la - \be_k) \tfrac{\pi}{2} \right).
\end{equation}
Clearly, the function $\dfrac{d(\la)}{p(\la)}$ is entire. Using \eqref{defd} and \eqref{defp}, we obtain the estimates
$$
    d(\la) = O(\exp(m \pi |\mbox{Im}\,\la|)), 
$$
$$
    |p(\la)| \ge C \exp(m \pi|\mbox{Im}\,\la|), \quad |\la| > \la^*, \quad \eps < |\arg \la| < \pi - \eps,
$$
for some positive constants $\la^*$ and $\eps$.
Consequently, the entire function $\dfrac{d(\la)}{p(\la)}$ is bounded in the sectors 
$\{ \la \in \mathbb C \colon \eps < |\arg \la| < \pi - \eps \}$.
Here and below we use the symbol $C$ for various constants, not depending on $\la$.
Applying Phragmen-Lindel\"of's and Liouville's theorems (see \cite{BFY}), we conclude that $\dfrac{d(\la)}{p(\la)}$ is constant.
So the function $d(\la)$ does not have other zeros except the zeros of $p(\la)$ which are described by \eqref{zerosd}.
\end{proof}

Using Lemma~\ref{lem:d1},  formula  \eqref{asymptDelta} and 
Rouch\'e's theorem (see \cite[Theorem~1.1.3]{FY01}),  on can   obtain the assertion of Theorem~\ref{thm:asympt}.

\bigskip

{\large \bf 4. Uniqueness Theorem for Inverse Problem~\ref{ip:1}}

\bigskip

In this section, we shall prove Theorem~\ref{thm:uniq1}. Rewrite the relation \eqref{defDelta} in the form
\begin{equation} \label{Delta1}
    \Delta(\la) = A_1(\la) S_1(\pi, \la) + B_1(\la) S_1'(\pi, \la),
\end{equation}
where
\begin{equation} \label{defAB1}
    A_1(\la) := \sum_{j = 2}^m S_j'(\pi, \la) \prod_{\substack{k = 2 \\ k \ne j}}^m S_k(\pi, \la) + 
	d_m(\la) \prod_{k = 2}^{m-1} S_k(\pi, \la), \quad
    B_1(\la) := \prod_{k = 2}^m S_k(\pi, \la).		
\end{equation}

Substituting \eqref{intS} and \eqref{intSp} for $j = 1$ into \eqref{Delta1} and multiplying the result by $\la$,
we get
\begin{equation} \label{intAB1}
     A_1(\la) \int_{-\pi}^{\pi} K_1(t) \exp(i \la t) \, dt + \la B_1(\la) \int_{-\pi}^{\pi} N_1(t) \exp(i \la t) \, dt
     - G_1(\la) = \la \Delta(\la), 
\end{equation}
where
\begin{equation} \label{defG1}
   G_1(\la) := -A_1(\la) \sin (\la - \al_1) \pi - \la B_1(\la) \cos (\la - \al_1) \pi. 		
\end{equation}
Here $``1``$ denotes the connection with the inverse problem, that consists in recovering the pencil coefficients on
the edge $e_1$. 

Denote $\mathcal H = L_2(-\pi, \pi) \oplus L_2(-\pi, \pi)$ the complex Hilbert space. The scalar product
and the norm in $\mathcal H$ are defined as follows.
$$
    (g, h)_{\mathcal H} = \int_{-\pi}^{\pi} (\overline{g_1(t)} h_1(t) + \overline{g_2(t)} h_2(t)) \, dt, \quad
    \| g \|_{\mathcal H} = \sqrt{\int_{-\pi}^{\pi} (|g_1(t)|^2 + |g_2(t)|^2) \, dt}.
$$

Define the vector-functions
\begin{equation} \label{defh1}
    f_1(t) = \begin{bmatrix} \overline{K_1(t)} \\ \overline{N_1(t)} \end{bmatrix}, \quad 
    h_1(t, \la) = \begin{bmatrix} A_1(\la) \\ \la B_1(\la) \end{bmatrix} \exp(i \la t),
\end{equation}
where the functions $K_1$ and $N_1$ are as there in \eqref{intS} and \eqref{intSp}.
Consequently, $f_1 \in \mathcal H$. Obviously, $h_1(t, \la)$ and its derivatives with respect to $\la$ also belong to $\mathcal H$
for each fixed $\la \in \mathbb C$.
Therefore the relation \eqref{intAB1} can be rewritten in the the form
\begin{equation} \label{scal1}
    (f_1(t), h_1(t, \la))_{\mathcal H} = G_1(\la) + \la \Delta(\la).
\end{equation}

Let $\Lambda = \{ \lambda_{\theta} \}_{\theta \in \Theta}$ be a subspectrum of the problem $L$ which satisfies assumption $(B)$.
Note that $\la_{\theta}$ is a zero of the function $\Delta(\la)$ of multiplicity at least $m_{\theta}$.
For $\theta \in \Theta_1$ we have $S_1(\pi, \la_{\theta}) \ne 0$ and $B_1(\la_{\theta}) \ne 0$.
Consequently,  for $\theta \in \Theta_1,$ the relations \eqref{Delta1} and \eqref{scal1} imply 
\begin{equation} \label{passABS1}
    \frac{d^j}{d\la^j} \left( \frac{A_1(\la)}{B_1(\la)}\right) \Big|_{\la = \la_{\theta}} = -\frac{d^j}{d\la^j} 
   \left( \frac{S_1'(\pi, \la)}{S_1(\pi, \la)}\right) \Big|_{\la = \la_{\theta}}, \quad j = \overline{0, m_{\theta}-1},
\end{equation}  
\begin{equation} \label{scal2}
    \biggl(f_1(t), \frac{d^j}{d\la^j} \left(\frac{h_1(t, \la)}{B_1(\la)}\right)\biggr)_{\mathcal H} = 
    \frac{d^j}{d\la^j} \left(\frac{G_1(\la)}{B_1(\la)}\right), \quad \la = \la_{\theta}, \quad 
	j = \overline{0, m_{\theta}-1}.
\end{equation}

For $\theta \in \Theta_2$ we have $S_1(\pi, \la_{\theta}) = 0$ and $B_1(\la_{\theta}) = 0$, but
$S_1'(\pi, \la_{\theta}) \ne 0$ and $A_1(\la_{\theta}) \ne 0$. In this case, it follows from the relations \eqref{Delta1} and \eqref{scal1},
that
\begin{equation} \label{passABS2}
    \frac{d^j}{d\la^j} \left( \frac{B_1(\la)}{A_1(\la)}\right) \Big|_{\la = \la_{\theta}} = -\frac{d^j}{d\la^j} 
   \left( \frac{S_1(\pi, \la)}{S_1'(\pi, \la)}\right) \Big|_{\la = \la_{\theta}}, \quad j = \overline{0, m_{\theta}-1},
\end{equation} 
\begin{equation} \label{scal3}
    \biggl(f_1(t), \frac{d^j}{d\la^j} \left(\frac{h_1(t, \la)}{A_1(\la)}\right)\biggr)_{\mathcal H} = 
    \frac{d^j}{d\la^j} \left(\frac{G_1(\la)}{A_1(\la)}\right), \quad \la = \la_{\theta}, \quad 
	j = \overline{0, m_{\theta}-1}.
\end{equation}

For definiteness,  we assume that $0 \not \in \Lambda$ and $0 \not \in \Theta$.
Denote $\Theta_0 = \Theta \cup \{ 0 \}$, $\la_0 := 0$, $m_0 := 1$.
One can deal with the case $0 \in \Lambda$, by applying a shift of the spectrum and minor technical modifications.

Since the function $S_1(\pi, \la)$ is entire, the relation \eqref{intS} yields
\begin{equation} \label{scal4}
    \int_{-\pi}^{\pi} K_1(t) \, dt = \sin \al_1 \pi.
\end{equation}

Combining the relations \eqref{scal2}, \eqref{scal3} and \eqref{scal4} together, we arrive at the following result
\begin{equation} \label{main1}
    (f_1, h_{1,\theta j})_{\mathcal H} = G_{1, \theta j}, \quad \theta \in \Theta_0, \quad j = \overline{0, m_{\theta}-1},
\end{equation}
where
\begin{gather*}
    h_{1,00}(t) = \begin{bmatrix} 1 \\ 0 \end{bmatrix}, \quad G_{1, 00} = \sin \al_1 \pi, \\
    h_{1,\theta j}(t) = \frac{d^j}{d\la^j} \left(\frac{h_1(t, \la)}{B_1(\la)}\right) \Big|_{\la = \la_{\theta}}, \quad
    G_{1,\theta j} = \frac{d^j}{d\la^j} \left(\frac{G_1(\la)}{B_1(\la)}\right) \Big|_{\la = \la_{\theta}}, \quad
    \theta \in \Theta_1, \\
    h_{1,\theta j}(t) = \frac{d^j}{d\la^j} \left(\frac{h_1(t, \la)}{A_1(\la)}\right) \Big|_{\la = \la_{\theta}}, \quad
    G_{1,\theta j} = \frac{d^j}{d\la^j} \left(\frac{G_1(\la)}{A_1(\la)}\right) \Big|_{\la = \la_{\theta}}, \quad
    \theta \in \Theta_2.
\end{gather*}

Thus, we have derived the relations \eqref{main1} which are crucial for solving Inverse Problem~\ref{ip:1}.
The vector function $f_1$, containing $K_1$ and $N_1$, will be unknown while 
the vector functions $h_{1,\theta j}(t)$ and the right-hand sides $G_{1, \theta j}$, as we will see below, 
can be constructed by using the known data. 

Denote 
\begin{equation} \label{defHG1}
\mathscr H_1(\Lambda) := \{ h_{1,\theta j}(t) \}_{\theta \in \Theta_0, \, j = \overline{0, m_{\theta}-1}}, \quad
\mathscr G_1(\Lambda) := \{ G_{1,\theta j} \}_{\theta \in \Theta_0, \, j = \overline{0, m_{\theta}-1}}.
\end{equation}
  
Then  the left-hand side of \eqref{main1} contains Fourier coefficients
of $f_1$ with respect to the system $\mathscr H_1(\Lambda)$.

Now we are ready to prove the following abstract uniqueness theorem.

\begin{thm} \label{thm:uniq1-abs}
Suppose  $p_j = \tilde p_j$ in $AC[0,\pi]$  and $q_j = \tilde q_j$  
in $L(0, \pi)$ for $j = \overline{2, m},$ 
 $\Lambda = \tilde \Lambda$, $\al_1 = \tilde \al_1,$ assumption $(B)$ holds for the both pairs $(L, \tilde \Lambda)$
and $(\tilde L, \tilde \Lambda)$, and the system $\mathscr H_1(\Lambda)$ is complete in $\mathcal H$. Then $p_1 = \tilde p_1$ in $AC[0, \pi]$
and $q_1 = \tilde q_1$ in $L(0, \pi)$. Thus, under the above assumptions, the solution of Inverse Problem~\ref{ip:1}
is unique.
\end{thm}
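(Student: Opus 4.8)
The plan is to exploit the relations \eqref{main1}, which express the Fourier coefficients of the unknown vector function $f_1 = [\overline{K_1}, \overline{N_1}]^{\mathsf T}$ with respect to the system $\mathscr H_1(\Lambda)$ in terms of quantities built solely from the given data. First I would verify that, under the hypotheses, both the system $\mathscr H_1(\Lambda)$ and the right-hand sides $\mathscr G_1(\Lambda)$ coincide for the problems $L$ and $\tilde L$. Indeed, the functions $A_1(\la)$, $B_1(\la)$ depend only on $S_j(\pi, \la)$, $S_j'(\pi, \la)$, $C_j(\pi, \la)$ for $j = \overline{2,m}$, which are determined by $\{p_j\}_{j=2}^m$, $\{q_j\}_{j=2}^m$; hence $A_1 = \tilde A_1$, $B_1 = \tilde B_1$, and likewise $d_m = \tilde d_m$. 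Together with $\al_1 = \tilde\al_1$ this gives $G_1(\la) = \tilde G_1(\la)$ via \eqref{defG1}, and $h_1(t,\la)/B_1(\la) = \tilde h_1(t,\la)/\tilde B_1(\la)$, $h_1(t,\la)/A_1(\la) = \tilde h_1(t,\la)/\tilde A_1(\la)$ on the common spectrum $\Lambda = \tilde\Lambda$. Since assumption $(B)$ holds for both pairs with the \emph{same} $\Lambda$, the partition $\Theta = \Theta_1 \cup \Theta_2$ is the same (the class of $\theta$ is decided by whether $B_1(\la_\theta) = 0$, equivalently $S_1(\pi,\la_\theta) = 0$), so the index set and the rule defining $h_{1,\theta j}$, $G_{1,\theta j}$ are identical. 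Therefore $\mathscr H_1(\Lambda) = \mathscr H_1(\tilde\Lambda)$ and $\mathscr G_1(\Lambda) = \mathscr G_1(\tilde\Lambda)$.

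Next I would subtract the two systems of relations \eqref{main1} for $L$ and $\tilde L$: for every $\theta \in \Theta_0$ and $j = \overline{0, m_\theta - 1}$,
\begin{equation*}
    (f_1 - \tilde f_1, \, h_{1,\theta j})_{\mathcal H} = G_{1,\theta j} - \tilde G_{1,\theta j} = 0.
\end{equation*}
Thus $f_1 - \tilde f_1 \in \mathcal H$ is orthogonal to every element of $\mathscr H_1(\Lambda)$. By hypothesis the system $\mathscr H_1(\Lambda)$ is complete in $\mathcal H$, so $f_1 = \tilde f_1$ in $\mathcal H$, i.e. $K_1 = \tilde K_1$ and $N_1 = \tilde N_1$ as elements of $L_2(-\pi,\pi)$. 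Plugging this back into \eqref{intS} and \eqref{intSp} yields $S_1(\pi,\la) = \tilde S_1(\pi,\la)$ and $S_1'(\pi,\la) = \tilde S_1'(\pi,\la)$ for all $\la \in \mathbb C$.

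Finally I would recover the coefficients $p_1, q_1$ from the pair of entire functions $S_1(\pi,\la)$, $S_1'(\pi,\la)$. This is a standard inverse problem on the interval $[0,\pi]$ for the quadratic pencil $-y'' + q_1 y + 2\la p_1 y = \la^2 y$ with the boundary condition $y(0) = 0$: knowing the ``Dirichlet data'' $S_1(\pi,\la)$ together with $S_1'(\pi,\la)$ (equivalently, two spectra, or a Weyl-type function) determines $q_1$ and $p_1$ uniquely up to the natural constraint — here $\al_1$ is already fixed by $\al_1 = \tilde\al_1$, which is exactly the piece of data the transformation-operator representation \eqref{intS}, \eqref{intSp} does not see. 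Invoking the known uniqueness theorem for pencils on a finite interval (e.g. \cite{GG81, BY06}) gives $p_1 = \tilde p_1$ in $AC[0,\pi]$ and $q_1 = \tilde q_1$ in $L(0,\pi)$, completing the proof.

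\textbf{Main obstacle.} The conceptual content is entirely in the completeness hypothesis on $\mathscr H_1(\Lambda)$, which is assumed here and must be established separately (this is where the Riesz-basis analysis announced in the introduction will enter, and where the asymptotics \eqref{asymptla} and the specific choice \eqref{defLa} of $\Lambda$ are used); within the present \emph{abstract} theorem the only delicate point is the bookkeeping showing that the two partitions $\Theta = \Theta_1 \cup \Theta_2$ genuinely coincide and that all the data-dependent objects $A_1, B_1, G_1, h_{1,\theta j}, G_{1,\theta j}$ match for $L$ and $\tilde L$ — in particular that division by $B_1$ (resp. $A_1$) is legitimate at each $\la_\theta$, which is precisely what assumption $(B)$ guarantees, and that the extra relation \eqref{scal4} at $\la_0 = 0$ (encoding $\al_1 = \tilde\al_1$) is correctly incorporated.
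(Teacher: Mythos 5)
Your proposal is correct and follows essentially the same route as the paper: establish $A_1 = \tilde A_1$, $B_1 = \tilde B_1$, hence $\mathscr H_1(\Lambda) = \tilde{\mathscr H}_1(\tilde\Lambda)$ and (using $\al_1 = \tilde\al_1$) $\mathscr G_1(\Lambda) = \tilde{\mathscr G}_1(\tilde\Lambda)$, subtract the two versions of \eqref{main1}, invoke completeness to get $f_1 = \tilde f_1$, hence $S_1(\pi,\la) \equiv \tilde S_1(\pi,\la)$, $S_1'(\pi,\la) \equiv \tilde S_1'(\pi,\la)$, and finish with the uniqueness result for the pencil on a finite interval via the Weyl function $S_1'(\pi,\la)/S_1(\pi,\la)$ (the paper cites \cite{BY12} for this last step rather than \cite{GG81, BY06}, but that is immaterial).
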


In contrast to the uniqueness Theorem~\ref{thm:uniq1}, the statement of Theorem~\ref{thm:uniq1-abs} contains the completeness
condition for $\mathscr H_1(\Lambda)$ and has no requirements on asymptotic behavior of the subspectrum.

\begin{proof}[Proof of Theorem~\ref{thm:uniq1-abs}]
Consider boundary value problems $L$ and $\tilde L$ and their subspectra $\Lambda$ and $\tilde \Lambda$, respectively,
satisfying the conditions of the theorem. By the assumptions, we have $S_j(x, \la) \equiv \tilde S_j(x, \la)$ for $j = \overline{2, m}$.
In view of \eqref{defAB1}, this yields $A_1(\la) \equiv \tilde A_1(\la)$ and $B_1(\la) \equiv \tilde B_1(\la)$.
Consequently, $h_1(t, \la) \equiv \tilde h_1(t, \la)$. Taking the equality $\al_1 = \tilde \al_1$ into account, 
we also conclude that $G_1(\la) \equiv \tilde G_1(\la)$. Since $\Lambda = \tilde \Lambda$, we have 
$\mathscr H_1(\Lambda) = \tilde{\mathscr H}_1(\tilde \Lambda)$, $\mathscr G_1(\Lambda) = \tilde{\mathscr G}_1(\tilde \Lambda)$.
Hence along with the relations \eqref{main1}, we obtain
\begin{equation} \label{maint}
    (\tilde f_1, h_{1, \theta j})_{\mathcal H} = G_{1, \theta j}, \quad \theta \in \Theta_0, \quad j = \overline{0, m_{\theta}-1}.
\end{equation}
Subtracting \eqref{maint} from \eqref{main1}, we see that the difference $f_1 - \tilde f_1$ is orthogonal in $\mathcal H$
to all the elements of the system $\mathscr H_1(\Lambda)$, which is complete. Therefore $f_1 = \tilde f_1$,
so $K_1 = \tilde K_1$ and $N_1 = \tilde N_1$ in $L_2(-\pi, \pi)$. Now \eqref{intS} and \eqref{intSp} yield
$S_1(\pi, \la) \equiv \tilde S_1(\pi, \la)$, $S_1'(\pi, \la) \equiv \tilde S_1'(\pi, \la)$.

Note that $\dfrac{S_1'(\pi, \la)}{S_1(\pi, \la)}$ is the Weyl function of the boundary value problem
on a finite interval $(0, \pi)$:
\begin{equation} \label{bvp}
-y_1''(x_1) + q_1(x_1) y_1(x_1) + 2 \la p_1(x_1) y_1(x_1) = \la^2 y_1(x_1),  
\quad y_1(0) = y_1(\pi) = 0.
\end{equation}

It has been proved in \cite{BY12}, that the functions $p_1 \in AC[0, \pi]$ and $q_1 \in L(0,\pi)$ are uniquely specified 
by the Weyl function, and can be constructed by the method of spectral mappings. 
Hence $p_1 = \tilde p_1$ in $AC[0, \pi]$ and $q_1 = \tilde q_1$ in $L(0,\pi)$.
\end{proof}

Suppose that the system of vector-functions $\mathscr H_1(\Lambda)$ is an unconditional basis in $\mathcal H$,
i.e. the normalized system $\{ h_{1,\theta j}(t) / \| h_{1,\theta j}(t) \|_{\mathcal H} \}$ is a Riesz basis.
One can find more about Riesz bases in \cite[Section~1.8.5]{FY01}. 
The proof of Theorem~\ref{thm:uniq1-abs} gives the following algorithm for solving Inverse Problem~\ref{ip:1},
if we know additionally the number $\al_1$.

\begin{alg} \label{alg:1}
Suppose the functions $\{ p_j \}_{j = 2}^m$, $\{ q_j \}_{j = 2}^m$, the subspectrum $\Lambda$
and the number $\al_1$ are given, and assumption $(B)$ holds.

\begin{enumerate}
\item Find the solutions $S_j(x_j, \la)$ of equations~\eqref{eqv} for $j = \overline{2, m}$, satisfying the initial 
conditions~\eqref{ic}.
\item Construct the functions $A_1(\la)$ and $B_1(\la)$ by \eqref{defAB1}.
\item Find the functions $h_1(t, \la)$ and $G_1(\la)$ by \eqref{defh1} and \eqref{defG1}, and then
use them together with the given subspectrum $\Lambda$ to construct $\mathscr H_1(\Lambda)$ and $\mathscr G_1(\Lambda)$
(see~\eqref{defHG1}). 
\item Determine the vector-function $f_1(t)$, using its coordinates in the Riesz basis (see \eqref{main1}), 
i.e. find $K_1(t)$ and $N_1(t)$.
\item Find $S_1(\pi, \la)$ and $S_1'(\pi, \la)$ by \eqref{intS} and \eqref{intSp}.
\item Recover the coefficients $p_1$ and $q_1$ of the boundary value problem \eqref{bvp} from the Weyl function
$\dfrac{S_1'(\pi, \la)}{S_1(\pi, \la)}$, solving the inverse problem on a finite interval by the method of spectral mappings
(see \cite{BY12}).
\end{enumerate} 
\end{alg}

We will show that the assumptions in  Theorem~\ref{thm:uniq1} imply the completeness of the system $\mathscr H_1(\Lambda)$,
and then derive Theorem~\ref{thm:uniq1} as a corollary of Theorem~\ref{thm:uniq1-abs}.
First we investigate the system of vector-functions 
$\mathscr V(\Lambda) := \{ v_{\theta j}(t) \}_{\theta \in \Theta_0, \, j = \overline{0, m_{\theta} - 1}}$, where
\begin{equation} \label{defV}
v_{\theta j}(t) = \frac{d^j}{d\la^j} v(t, \la) \Big|_{\la = \la_{\theta}}, \quad
v(t, \la) = \begin{bmatrix} S_1'(\pi, \la) \\ -\la S_1(\pi, \la) \end{bmatrix} \exp(i \la t).
\end{equation} 

\begin{lem} \label{lem:HV}
Denote by $\Lambda$ a subspectrum  of the boundary value problem $L$  which satisfies  assumption $(B)$.
Then the system $\mathscr H_1(\Lambda)$ is complete in $\mathcal H$ if and only if so 
is the system $\mathscr V(\Lambda)$.
\end{lem}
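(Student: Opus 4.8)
The plan is to reduce both completeness statements to a single comparison of the orthogonal complements of the two systems. Fix $f=(f^{(1)},f^{(2)})\in\mathcal H$ and set $F_k(\la):=\int_{-\pi}^{\pi}\overline{f^{(k)}(t)}\exp(i\la t)\,dt$, $k=1,2$; by the Paley-Wiener theorem these are entire functions of exponential type at most $\pi$, square integrable on the real line, the map $f\mapsto(F_1,F_2)$ is a linear bijection of $\mathcal H$ onto the corresponding space of pairs, and $f=0$ iff $F_1\equiv F_2\equiv 0$. Put
\[
\Psi_f(\la):=A_1(\la)F_1(\la)+\la B_1(\la)F_2(\la),\qquad
\Phi_f(\la):=S_1'(\pi,\la)F_1(\la)-\la S_1(\pi,\la)F_2(\la),
\]
which are entire. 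Since the inner product of $\mathcal H$ is linear in its second argument and differentiation in $\la$ commutes with the integral, one checks from \eqref{defh1} and \eqref{defV} that $(f,h_1(\cdot,\la))_{\mathcal H}=\Psi_f(\la)$ and $(f,v(\cdot,\la))_{\mathcal H}=\Phi_f(\la)$, whence $(f,h_{1,\theta j})_{\mathcal H}=\frac{d^j}{d\la^j}(\Psi_f/B_1)\big|_{\la_\theta}$ for $\theta\in\Theta_1$, $(f,h_{1,\theta j})_{\mathcal H}=\frac{d^j}{d\la^j}(\Psi_f/A_1)\big|_{\la_\theta}$ for $\theta\in\Theta_2$, $(f,h_{1,00})_{\mathcal H}=F_1(0)$, and $(f,v_{\theta j})_{\mathcal H}=\frac{d^j}{d\la^j}\Phi_f\big|_{\la_\theta}$ for $\theta\in\Theta_0$. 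Using that $B_1(\la_\theta)\ne 0$ on $\Theta_1$ and $A_1(\la_\theta)\ne 0$ on $\Theta_2$ (see \eqref{defAB1}), so the quotients are holomorphic near $\la_\theta$, this translates to: $f\perp\mathscr H_1(\Lambda)$ iff $F_1(0)=0$ and $\Psi_f$ has a zero of order $\ge m_\theta$ at each $\la_\theta$, $\theta\in\Theta$; while $f\perp\mathscr V(\Lambda)$ iff $\Phi_f$ has a zero of order $\ge m_\theta$ at each $\la_\theta$, $\theta\in\Theta$, and $\Phi_f(0)=0$.

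Next I would use the factorization $\Delta=A_1 S_1(\pi,\cdot)+B_1 S_1'(\pi,\cdot)$ from \eqref{Delta1} to derive the two identities
\[
S_1(\pi,\la)\,\Psi_f(\la)+B_1(\la)\,\Phi_f(\la)=\Delta(\la)\,F_1(\la),\qquad
S_1'(\pi,\la)\,\Psi_f(\la)-A_1(\la)\,\Phi_f(\la)=\la\,\Delta(\la)\,F_2(\la).
\]
Fix $\theta\in\Theta$: then $\Delta$ vanishes to order $\ge m_\theta$ at $\la_\theta$ and $\la_\theta\ne 0$. If $\theta\in\Theta_1$, then $S_1(\pi,\la_\theta)\ne 0$ and $B_1(\la_\theta)\ne 0$, so the right-hand side $\Delta F_1$ of the first identity vanishes to order $\ge m_\theta$ at $\la_\theta$, and dividing by whichever of $S_1(\pi,\cdot)$, $B_1$ does not vanish there shows that $\Psi_f$ vanishes to order $\ge m_\theta$ at $\la_\theta$ iff $\Phi_f$ does. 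If $\theta\in\Theta_2$, then $S_1(\pi,\la_\theta)=0$ --- hence $S_1'(\pi,\la_\theta)\ne 0$, since $S_1(\pi,\cdot)$ and $S_1'(\pi,\cdot)$ have no common zeros --- and $A_1(\la_\theta)\ne 0$, so the same argument applied to the second identity (whose right-hand side $\la\Delta F_2$ vanishes to order $\ge m_\theta$ because $\la_\theta\ne 0$) yields the same equivalence. Finally, at $\la=0$ we have $\Phi_f(0)=S_1'(\pi,0)F_1(0)$; since $S_1(\pi,\cdot)$ and $S_1'(\pi,\cdot)$ have no common zeros, $S_1'(\pi,0)\ne 0$ whenever $S_1(\pi,0)=0$, and noting (or arranging, by the spectral shift used elsewhere in the paper) that $S_1'(\pi,0)\ne 0$, we obtain $\Phi_f(0)=0$ iff $F_1(0)=0$, i.e. the adjoined vectors $v_{00}=v(\cdot,0)$ and $h_{1,00}$ impose the same condition on $f$.

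Combining the two previous paragraphs, for every $f\in\mathcal H$ one has $f\perp\mathscr H_1(\Lambda)$ if and only if $f\perp\mathscr V(\Lambda)$. Hence the two systems have the same orthogonal complement in $\mathcal H$, so $\mathscr H_1(\Lambda)$ is complete if and only if $\mathscr V(\Lambda)$ is, which is the assertion of Lemma~\ref{lem:HV}.

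The step I expect to be the main obstacle is the transfer of zeros at multiple points $\la_\theta$ (that is, when $m_\theta>1$): the informal ``divide by the nonvanishing factor'' has to be made rigorous, for instance by passing to the ring of holomorphic germs at $\la_\theta$ and comparing, via the Leibniz rule, the orders of vanishing of $\Delta$, $\Psi_f$, $\Phi_f$ and of the auxiliary factors $S_1(\pi,\cdot)$, $S_1'(\pi,\cdot)$, $A_1$, $B_1$. A smaller point needing care is the auxiliary index $\theta=0$, where one must verify that $v_{00}$ and $h_{1,00}$ constrain $f$ in exactly the same way.
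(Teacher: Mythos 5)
Your proof is correct and takes essentially the same route as the paper: both arguments reduce completeness to showing that an arbitrary element of $\mathcal H$ is orthogonal to $\mathscr H_1(\Lambda)$ if and only if it is orthogonal to $\mathscr V(\Lambda)$, using the decomposition \eqref{Delta1}, the fact that each $\la_{\theta}$ is a zero of $\Delta(\la)$ of multiplicity at least $m_{\theta}$, and the non-vanishing of $S_1(\pi,\cdot), B_1$ on $\Theta_1$ and of $S_1'(\pi,\cdot), A_1$ on $\Theta_2$. The only difference is bookkeeping: the paper transfers the vanishing conditions via the ratio identities \eqref{passABS1}--\eqref{passABS2}, while you use the two product identities and compare orders of vanishing in the local ring at $\la_{\theta}$; your explicit treatment of the $\theta=0$ term (needing $S_1'(\pi,0)\ne 0$, arranged by a shift) is a point the paper leaves implicit.
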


\begin{proof}
Consider the relations
\begin{equation} \label{scalw}
    (w, h_{1,\theta j})_{\mathcal H} = 0, \quad \theta \in \Theta_0, \quad j = \overline{0, m_{\theta}-1},
\end{equation}
for some $w \in \mathcal H$. The system $\mathscr H_1(\Lambda)$ is complete in $\mathcal H$ if and only if
the relations \eqref{scalw} imply $w = 0$. Let $w(t) = [\overline{w_1(t)}, \overline{w_2(t)}]^T$. Then \eqref{scalw} can
be rewritten in the form
\begin{gather} \nonumber
\int_{-\pi}^{\pi} w_1(t) \, dt = 0, \quad \theta = 0, \\ \label{smB1}
\left( \frac{d^j}{d\la^j} \int_{-\pi}^{\pi} \biggl( \frac{A_1(\la)}{B_1(\la)} w_1(t) + \la w_2(t) \biggr) \exp(i\la t) \, dt \right) \Big|_{\la = \la_{\theta}} = 0,
\quad \theta \in \Theta_1, \\  \label{smB2}
\left( \frac{d^j}{d\la^j} \int_{-\pi}^{\pi} \biggl( w_1(t) + \frac{ \la B_1(\la)}{A_1(\la)} w_2(t) \biggr) \exp(i\la t) \, dt \right)\Big|_{\la = \la_{\theta}} = 0,
\quad \theta \in \Theta_2,
\end{gather}
for all $\theta \in \Theta_0$, $j = \overline{0, m_{\theta}-1}$. In view of \eqref{passABS1} and \eqref{passABS2}, the relations 
\eqref{smB1} and \eqref{smB2} are equivalent to the following ones:
\begin{gather*}
\left( \frac{d^j}{d\la^j} \int_{-\pi}^{\pi} \biggl( \frac{S_1'(\pi,\la)}{S_1(\pi,\la)} w_1(t) - \la w_2(t) \biggr) \exp(i\la t) \, dt \right) \Big|_{\la = \la_{\theta}} = 0,
\quad \theta \in \Theta_1, \\  
\left( \frac{d^j}{d\la^j} \int_{-\pi}^{\pi} \biggl( w_1(t) - \frac{ \la S_1(\pi, \la)}{S_1'(\pi, \la)} w_2(t) \biggr) \exp(i\la t) \, dt \right) \Big|_{\la = \la_{\theta}} = 0,
\quad \theta \in \Theta_2.
\end{gather*}

Recall that $S_1(\pi, \la_{\theta}) \ne 0$ for $\theta \in \Theta_1$
and $S_1'(\pi, \la_{\theta}) \ne 0$ for $\theta \in \Theta_2$.
Consequently,   the relations \eqref{scalw} are equivalent to the relations
\begin{equation} \label{scalv}
   (w, v_{\theta j})_{\mathcal H} = 0, \quad \theta \in \Theta_0, \quad j = \overline{0, m_{\theta}-1},
\end{equation}
that imply $w = 0$ if and only if the system $\mathscr V(\Lambda)$ is complete.
\end{proof}

\begin{lem} \label{lem:complete}
Denote by $\Lambda$ a subspectrum  of the boundary value problem $L$  which  satisfies  \eqref{defLa} and condition $(B)$. 
Then the corresponding system $\mathscr H_1(\Lambda)$ is complete in $\mathcal H$. 
\end{lem}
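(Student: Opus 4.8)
The plan is to prove Lemma~\ref{lem:complete} by reducing the completeness of $\mathscr H_1(\Lambda)$ in $\mathcal H$ to that of the explicit system $\mathscr V(\Lambda)$ via Lemma~\ref{lem:HV}, and then to establish the completeness of $\mathscr V(\Lambda)$ by comparing it with a model system whose completeness (in fact Riesz-basis property) is known from classical Fourier analysis on $(-\pi,\pi)$. First I would invoke Lemma~\ref{lem:HV} to replace $\mathscr H_1(\Lambda)$ by $\mathscr V(\Lambda) = \{v_{\theta j}(t)\}$, whose generating function is $v(t,\la) = [S_1'(\pi,\la),\,-\la S_1(\pi,\la)]^T \exp(i\la t)$. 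Using the integral representations \eqref{intS}--\eqref{intSp} for the entries of $v(t,\la)$ together with the asymptotics \eqref{asymptla} for the eigenvalues in $\Lambda$ (recall $\Lambda$ has the form \eqref{defLa}, i.e. four subsequences $\la_{nk} = 2n + \be_k + O(|n|^{-1})$, $k = \overline{1,4}$, with one value excluded), one sees that $v(t,\la_{nk})$ is, up to an $\ell^2$-summable perturbation in $\mathcal H$, close to the model vector-functions built from $\sin(\la-\al_1)\pi$, $\cos(\la-\al_1)\pi$, and $\exp(i\la t)$.

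The core step is to identify the right model system and prove its completeness/Riesz-basis property. The natural candidate is the system generated by $v^0(t,\la) = [\cos(\la-\al_1)\pi,\,-\sin(\la-\al_1)\pi]^T \exp(i\la t)$ — this is, up to a factor, $[\,\exp(i(\la-\al_1)\pi) + \exp(-i(\la-\al_1)\pi),\ i(\exp(i(\la-\al_1)\pi) - \exp(-i(\la-\al_1)\pi))\,]^T \exp(i\la t)/2$, so after an elementary change of variables in $\mathcal H = L_2(-\pi,\pi)\oplus L_2(-\pi,\pi)$ the functions $\{v^0(t,\la_{nk})\}$ at the points $2n + \be_k$ become an explicit exponential-type system. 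The key arithmetic input is condition $(A)$ together with the conclusion of Theorem~\ref{thm:asympt}: $\be_k \not\equiv \al_1 \pmod 1$, the $\be_k$ are distinct mod $1$, and we have exactly $2$ (boundary-edge) parameters worth of exponentials — matching the two copies of $L_2(-\pi,\pi)$ — so that the density of exponents is exactly right. Here I expect to quote a known result (e.g. from \cite{FY01} or the Riesz-basis machinery referenced there) that a system $\{\exp(i\mu_n t)\}$ with $\mu_n$ asymptotically a union of arithmetic progressions of the correct total density forms a Riesz basis in $L_2$; the exclusion of one value in \eqref{defLa} is precisely what compensates for the extra element $h_{1,00} = [1,0]^T$ adjoined at $\la_0 = 0$, keeping the counting balanced.

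With the model system's Riesz-basis property in hand, I would finish by a standard perturbation argument: since $v(t,\la_{nk}) - v^0(t,\la_{nk}) \to 0$ in $\mathcal H$ fast enough (the remainder terms in \eqref{intS}--\eqref{intSp} and the $O(|n|^{-1})$ in \eqref{asymptla} give $\ell^2$ decay of the differences), all but finitely many elements of $\mathscr V(\Lambda)$ differ from a Riesz basis by an $\ell^2$-small perturbation. By the Bari--type stability theorem for Riesz bases (again available via \cite{FY01}), the perturbed infinite tail is still complete modulo a finite-dimensional subspace; the finitely many exceptional low-index elements, together with the extra vector $h_{1,00}$, must then be handled directly. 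This is where condition $(B)$ enters on the finite part: it guarantees that the relevant denominators $B_1(\la_\theta)$ or $A_1(\la_\theta)$ in the definition of $h_{1,\theta j}$ are nonzero, so each $v_{\theta j}$ is a genuine nonzero element and no degeneracy occurs. The main obstacle, and the step deserving the most care, is the matching of exponent density in the model system — one must verify that dropping a single eigenvalue and adding the vector $[1,0]^T$ at the origin produces exactly a complete system in $L_2(-\pi,\pi)\oplus L_2(-\pi,\pi)$ and neither an incomplete one nor one with a (finite) defect; all the asymptotic bookkeeping of Theorem~\ref{thm:asympt}, in particular the split of $\mathcal I$ and the choice \eqref{defLa}, is designed to make this count come out right, and spelling it out rigorously is the crux of the proof.
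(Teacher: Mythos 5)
There is a genuine gap. Your first step (reducing to the completeness of $\mathscr V(\Lambda)$ via Lemma~\ref{lem:HV}) matches the paper, but the main engine you propose --- quadratic closeness of $\mathscr V(\Lambda)$ to an explicit model system $\mathscr V^0$ at the points $2n+\be_k$, plus a Bari-type stability theorem --- cannot by itself deliver completeness. Quadratic closeness to a Riesz basis only gives that the map sending the model vectors to the perturbed ones is of the form $I+K$ with $K$ Hilbert--Schmidt, hence Fredholm of index zero; this yields completeness only up to a possible \emph{finite-dimensional defect}, and no exponent-density or counting argument can rule that defect out, because the defect is not caused by the asymptotics but by possible linear dependencies among the actual low-index elements $v_{\theta j}$ (which involve the unknown values $S_1(\pi,\la_\theta)$, $S_1'(\pi,\la_\theta)$ and the multiple eigenvalues). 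Your plan to ``handle the finitely many exceptional elements directly'' using condition $(B)$ only ensures each such element is nonzero; nonzero vectors can still fail to span the complementary finite-dimensional subspace. You yourself flag this counting step as the crux, but the proposal supplies no mechanism to close it --- and in the paper's architecture this perturbation argument is used only \emph{after} completeness is known, namely in the proof of Theorem~\ref{thm:Riesz}, where completeness of $\mathscr H_1(\Lambda)$ (i.e.\ the present lemma) is an explicit input.

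The paper's proof of the lemma is instead function-theoretic and global, and this is the missing idea: assume $(w,v_{\theta j})_{\mathcal H}=0$ for all indices, form the entire function
\begin{equation*}
W(\la)=\int_{-\pi}^{\pi}\bigl(w_1(t)S_1'(\pi,\la)-w_2(t)\,\la S_1(\pi,\la)\bigr)\exp(i\la t)\,dt,
\end{equation*}
which vanishes at all $\la_\theta$ with multiplicities, then divide by the canonical product $D(\la)=\prod_{k=1}^4 D_k(\la)$ built from the four subsequences of \eqref{defLa}; using the representation $D_k(\la)=\sin((\la-\be_k)\tfrac{\pi}{2})+\varkappa_k(\la)$ one gets $|D(\la)|\ge C\exp(2\pi|\mbox{Im}\,\la|)$ in sectors, while $W(\la)=O(\exp(2\pi|\mbox{Im}\,\la|))$ and, crucially, $W(\la)=o(1)$ on the real axis (Riemann--Lebesgue). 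Phragm\'en--Lindel\"of and Liouville give $W/D\equiv\mathrm{const}$, and the real-axis decay forces $W\equiv 0$. A second, similar step is then needed to decouple the two components: from $W\equiv 0$ the Fourier transform $W_1(\la)=\int_{-\pi}^{\pi}w_1(t)\exp(i\la t)\,dt$ vanishes at all zeros of $S_1(\pi,\la)$ (where $S_1'(\pi,\la)\ne 0$), so $W_1/S_1(\pi,\cdot)$ is entire and the same growth argument gives $w_1=0$, whence $w_2=0$. This handling of all indices at once --- including multiplicities, the excluded eigenvalue, and the extra vector $[1,0]^T$ at $\la_0=0$ --- is exactly what your perturbation scheme does not provide, so the proposal as written does not prove the lemma.
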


\begin{proof}
In view of Lemma~\ref{lem:HV} and the assumptions of this lemma, it is sufficient to show 
that the system $\mathscr V(\Lambda)$ is complete. Suppose that the relations \eqref{scalv} hold for some $w \in \mathcal H$,
$w = [w_1, w_2]^T$. Then the entire function
\begin{equation} \label{defW}
    W(\la) = \int_{-\pi}^{\pi} (w_1(t) S_1'(\pi, \la) - w_2(t) \la S_1(\pi, \la)) \exp(i \la t) \, dt 
\end{equation}
has the zeros $\{ \la_{\theta} \}_{\theta \in \Theta_0}$, counting with multiplicities. Taking \eqref{intS} and \eqref{intSp} into account,
we obtain the estimate
\begin{equation} \label{estW}
    W(\la) = O(\exp(2 \pi |\mbox{Im}\, \la|)), \quad |\la| \to \iy.
\end{equation}
Moreover,
\begin{equation} \label{estW1}
    W(\la) = o(1), \quad \la \in \mathbb R, \quad \la \to +\iy.
\end{equation}

Since   $\Lambda$ has the form \eqref{defLa}, we can 
construct the following function
\begin{gather} \label{defD}
    D(\la) = \prod_{k = 1}^4 D_k(\la), \\ \nonumber
    D_k(\la) = \frac{\pi}{2} (\la - \la_{0k}) \prod_{\substack{n = -\iy \\ n \ne 0}}^{\iy} 
   \frac{\la_{nk} - \la}{2 n} \exp \left( \frac{\la - \be_k}{2 n}\right), \quad \la_{01} := 0.
\end{gather}

Relying on \cite[Lemma~2]{BB17} and \cite[Appendix B]{Bond18}, one can show that
\begin{equation} \label{reprDk}
   D_k(\la) = \sin \left( (\la - \be_k) \tfrac{\pi}{2} \right) + \varkappa_k(\la), \quad k = \overline{1, 4},
\end{equation}
where $\varkappa_k(\la)$  are entire functions of exponential type not greater than $\tfrac{\pi}{2}$ for  $k = \overline{1, 4}.$ Consequently, the following estimate holds
\begin{equation} \label{estD}
    |D(\la)| \ge C \exp(2 \pi |\mbox{Im}\,\la|), \quad \eps < |\arg \la| < \pi-\eps, \quad |\la| \ge \la^*,
\end{equation}
for some positive $\eps$ and $\la^*$.

Obviously, the function $\dfrac{W(\la)}{D(\la)}$ is entire.  The estimates \eqref{estW} and \eqref{estD} imply
that $\dfrac{W(\la)}{D(\la)}$ is bounded in the sectors $\{ \la \in \mathbb C \colon \eps < |\arg \la| < \pi-\eps\}.$  
Applying Phragmen-Lindel\"of's and Liouville's theorems \cite{BFY},   that $\dfrac{W(\la)}{D(\la)} \equiv C$.
Taking \eqref{estW1}, \eqref{defD} and \eqref{reprDk} into account, we  have $W(\la) \equiv 0$.

Without loss of generality, we assume that $S_1(\pi, 0) \ne 0$ and $S_1'(\pi, 0) \ne 0$. Those conditions can be achieved by a 
shift of the spectrum. Then it follows from \eqref{defW} and $W(\la) \equiv 0$, that the function
$$
   W_1(\la) := \int_{-\pi}^{\pi} w_1(t) \exp(i \la t) \, dt 
$$
has zeros at $\la = 0$ and all the zeros of $S_1(\pi, \la)$, counting with their multiplicities. Therefore
$\dfrac{W_1(\la)}{S_1(\pi, \la)}$ is an entire function. Applying  similar arguments as above, we can show  $W_1(\la) \equiv 0$
and consequently  $w_1 = 0$ in $L_2(-\pi, \pi).$  In view of \eqref{defW} and $W(\la) \equiv 0$, we also have $w_2 = 0$ in $L_2(-\pi, \pi)$.
Thus we have proved that the relations \eqref{scalv} imply $w = 0$, so the systems $\mathscr V(\Lambda)$ and
$\mathscr H_1(\Lambda)$ are complete in $\mathcal H$.
\end{proof}

\begin{proof}[Proof of Theorem~\ref{thm:uniq1}]
Suppose that the boundary value problems $L$ and $\tilde L$ together with their subspectra $\Lambda$ and $\tilde \Lambda$
satisfy the conditions of Theorem~\ref{thm:uniq1}. 

It follows from the proof of Lemma~\ref{lem:d1} that $\{ \be_k \}_{k =1}^{2m-1}$ are the roots
of the equation
\begin{equation} \label{eqal1}
	\sum_{j = 1}^{m-1} \cot (\la - \al_j) \pi = \frac{2(1 - \cos \la \pi)}{\sin \la \pi}.
\end{equation}
Thus, the number $(\al_1 \bmod 1)$ can be found from \eqref{eqal1} by substituting  $\la = \be_k$ 

Since $p_j = \tilde p_j$ and $q_j = \tilde q_j$ for $j = \overline{2, m}$, we have $S_j(x_j, \la) \equiv \tilde S_j(x_j, \la)$
for $j = \overline{2, m}$. Consequently, $\al_j = \tilde \al_j$ for $j = \overline{2, m}$. The equality $\Lambda = \tilde \Lambda$
together with \eqref{defLa} imply $\be_k = \tilde \be_k$, $k = \overline{1, 4}$. Therefore \eqref{eqal1} yields
$\al_1 \equiv \tilde \al_1 \pmod 1$. 

There are two possible cases: 

\smallskip

(i) $\al_1 \equiv \tilde \al_1 \pmod 2$,

\smallskip

(ii) $\al_1 \equiv \tilde \al_1 + 1 \pmod 2$.

\smallskip

In the case (i), the relation \eqref{defG1} implies $G_1(\la) \equiv \tilde G_1(\la)$. Together with \eqref{scal4}, 
this yields $\mathscr G_1(\Lambda) = \tilde {\mathscr G}_1(\tilde \Lambda)$. The completeness of the system 
$\mathscr H_1(\Lambda)$ follows from Lemma~\ref{lem:complete}, so we can proceed the proof of Theorem~\ref{thm:uniq1-abs}.

In the case (ii), we have $G_1(\la) \equiv -G_1(\la)$, $\mathscr G_1(\Lambda) = -\tilde {\mathscr G}_1(\tilde \Lambda)$
and $\mathscr H_1(\Lambda) = \tilde {\mathscr H}_1(\tilde \Lambda)$.
Using \eqref{main1} and the similar relation for $\tilde L$, we conclude that $f_1 = -\tilde f_1$, so
$K_1(t) = -\tilde K_1(t)$, $N_1(t) = -\tilde N_1(t)$. The relations \eqref{intS} and \eqref{intSp} imply
$S_1(\pi, \la) \equiv -\tilde S_1(\pi, \la)$, $S_1'(\pi, \la) \equiv -\tilde S_1'(\pi, \la)$. Consequently, the Weyl functions
coincide: $\dfrac{S_1'(\pi, \la)}{S_1(\pi, \la)} \equiv \dfrac{\tilde S_1'(\pi, \la)}{\tilde S_1(\pi, \la)}$.
Hence $p_1 = \tilde p_1$, $q_1 = \tilde q_1$ and $\al_1 = \tilde \al_1$, that leads to the contradiction, so the case (ii)
is impossible.
\end{proof}

\begin{remark}
It follows from the proof of Theorem~\ref{thm:uniq1}, that the condition $\al_1 = \tilde \al_1$ in the statement of 
Theorem~\ref{thm:uniq1-abs} can be changed by $\al_1 \equiv \tilde \al_1 \pmod 1$. 
\end{remark}

\bigskip

{\large \bf 5. Constructive Solution of Inverse Problem~\ref{ip:1}}

\bigskip

In this section, we obtain a constructive procedure for solving Inverse Problem~\ref{ip:1} of recovering the pencil coefficients
on the boundary edge. The key role in our algorithm will be played by the relations \eqref{main1}. First we will prove the following theorem.

\begin{thm} \label{thm:Riesz}
Let $\Lambda$ be a subspectrum of the problem $L$ which satisfies  \eqref{defLa} and assumption $(B).$  Then the system 
of vector-functions $\mathscr H_1(\Lambda)$ defined in \eqref{defHG1}  is an unconditional basis in $\mathcal H$,
i.e. the normalized system $\{ h_{1,\theta j}(t) / \| h_{1,\theta j}(t) \|_{\mathcal H} \}$ is a Riesz basis.
\end{thm}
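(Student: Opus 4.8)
The plan is to show that the system $\mathscr H_1(\Lambda)$ is a Riesz basis by exploiting the fact, established in the proof of Lemma~\ref{lem:HV}, that $\mathscr H_1(\Lambda)$ is obtained from the more elementary system $\mathscr V(\Lambda) = \{ v_{\theta j}(t) \}$ (see \eqref{defV}) via multiplication of the generating function by the scalar quotient $B_1(\la)^{-1}$ on $\Theta_1 \cup \{0\}$ and $A_1(\la)^{-1}$ on $\Theta_2$. Since such scalar rescalings (bounded above and below along the relevant subsequences, by assumption $(B)$ and the asymptotics of $A_1, B_1$) do not affect the Riesz-basis property of the \emph{normalized} system, it suffices to prove that the normalized system $\{ v_{\theta j}(t)/\|v_{\theta j}\|_{\mathcal H} \}$ is a Riesz basis in $\mathcal H$. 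I would state this reduction as a preliminary lemma and verify that $|A_1(\la_{\theta})|$, $|B_1(\la_{\theta})|$ and their first $m_\theta - 1$ derivatives stay comparable along the subsequences using \eqref{asymptS}--\eqref{asymptSp} and condition $(B)$ (the generic non-vanishing and the asymptotic separation of the $\be_k$ from the $\al_j$).

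Next I would analyze $\mathscr V(\Lambda)$ directly. The generating vector function is $v(t,\la) = [S_1'(\pi,\la), -\la S_1(\pi,\la)]^T \exp(i\la t)$, and by \eqref{intS}, \eqref{intSp} its two components are, up to a perturbation with Fourier transform in $L_2$, equal to $\cos(\la - \al_1)\pi \cdot e^{i\la t}$ and $-\sin(\la - \al_1)\pi \cdot e^{i\la t}$. The central idea is to compare $\mathscr V(\Lambda)$ with the \emph{model} system generated by $v^0(t,\la) = [\cos(\la-\al_1)\pi, -\sin(\la-\al_1)\pi]^T e^{i\la t}$ evaluated on the unperturbed nodes $\{2n + \be_k\}$. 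For the model system, one can pass to the Paley--Wiener space $PW_\pi^2 \cong \mathcal H$ and recognize the family as a (vector) exponential system; since $\{\be_k\}_{k=1}^4$ are distinct mod $2$ and we delete exactly one node, the counting is exact (the density matches $\mathcal H = L_2(-\pi,\pi)^2$), and one shows it is a Riesz basis by the standard argument with a suitable entire generating function — here the function $D(\la)$ from \eqref{defD}, whose product structure \eqref{reprDk} and lower bound \eqref{estD} were already established in the proof of Lemma~\ref{lem:complete}. Completeness of $\mathscr V(\Lambda)$ is already known from Lemma~\ref{lem:complete}; to upgrade to a Riesz basis I would invoke a Bari-type / stability theorem: a quadratically close perturbation of a Riesz basis is a Riesz basis provided it is complete (or minimal). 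The perturbation here is quadratically close precisely because the difference $v - v^0$ has $\mathcal H$-valued Fourier data coming from the $L_2$ kernels $K_1, N_1$, and the node shifts $\la_{nk} - (2n+\be_k) = O(|n|^{-1})$ are $\ell^2$-summable by \eqref{asymptla}.

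The main obstacle I anticipate is handling the \emph{multiple} and \emph{confluent} nodes correctly: when $\la_{\theta}$ has multiplicity $m_\theta > 1$ the system contains derivatives $\frac{d^j}{d\la^j} v(t,\la)|_{\la=\la_\theta}$, so $\mathscr V(\Lambda)$ is a generalized (divided-difference / Hermite) exponential system rather than a plain one, and the stability theorems must be applied in the form adapted to such systems. I would deal with this by grouping each cluster of nearby eigenvalues and replacing the $j$-th derivatives by an equivalent basis of divided differences over the cluster, observing that the transition matrices between $\{$derivatives at $\la_\theta\}$ and $\{$divided differences over the cluster$\}$ are uniformly (in the cluster) invertible because the cluster diameters are $O(|n|^{-1})$ — this is exactly the technical content behind \cite[Lemma~2]{BB17} and \cite[Appendix~B]{Bond18}, which I would cite. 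A secondary point requiring care is the sector estimate: the lower bound \eqref{estD} only holds away from the real axis, so the Riesz-basis verification in $PW_\pi^2$ must combine \eqref{estD} in the sectors with the decay \eqref{estW1} on the real line, just as in the completeness proof. Once these points are in place, the rescaling lemma transfers the Riesz-basis property from $\mathscr V(\Lambda)$ to $\mathscr H_1(\Lambda)$, completing the proof.
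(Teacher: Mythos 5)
Your overall skeleton coincides with the paper's: reduce the normalized system $\mathscr H_1(\Lambda)$ to $\mathscr V(\Lambda)$ via the scalar relations \eqref{passABS1}, \eqref{passABS2} (normalization makes the factors $A_1(\la_\theta)$, $B_1(\la_\theta)$ irrelevant), note that by \eqref{asymptla} with distinct $\be_k$ only finitely many eigenvalues are multiple, establish quadratic closeness of $\mathscr V(\Lambda)$ to the limit system $\mathscr V^0=\{v_{nk}^0\}$ of \eqref{defv0}, and conclude with the stability principle ``complete $+$ quadratically close to a Riesz basis $\Rightarrow$ Riesz basis'', completeness being supplied by Lemma~\ref{lem:complete}. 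For the same reason (finitely many multiple nodes), your cluster/divided-difference machinery is unnecessary, though harmless.

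The genuine gap is at the decisive step: proving that the limit system $\mathscr V^0$ is itself a Riesz basis in $\mathcal H$. You propose ``the standard argument with a suitable entire generating function --- here $D(\la)$'', combined with \eqref{estD} and \eqref{estW1}. This does not give a proof: first, $D(\la)$ is built from the perturbed eigenvalues $\la_{nk}$, not from the model nodes $2n+\be_k$; second, and more importantly, sector lower bounds together with Phragm\'en--Lindel\"of yield only completeness-type conclusions (exactly as in Lemma~\ref{lem:complete}), whereas the Riesz-basis property requires two-sided frame inequalities, and no scalar sine-type theorem applies directly to the vector system $\bigl\{[\cos(\be_k-\al_1)\pi,\,-\sin(\be_k-\al_1)\pi]^T e^{i(2n+\be_k)t}\bigr\}$ in $L_2(-\pi,\pi)\oplus L_2(-\pi,\pi)$, since each element is an exponential multiplied by a $k$-dependent direction vector rather than a plain exponential. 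The paper fills precisely this hole with Lemma~\ref{lem:V0}: a Gram-matrix computation showing that $(v_{nj}^0,v_{kl}^0)_{\mathcal H}$ equals $(e_{nj},e_{kl})_{L_2(-\pi,\pi)}$ for the scalar exponentials $e_{nj}=\exp(i(4n+2\be_j)t)$ (the direction-vector factor $\cos(\be_l-\be_j)\pi$ combines with the exponential integral through the double-angle identity), so $\mathscr V^0$ is a Riesz basis if and only if $\{e_{nj}\}$ is, and the latter follows from Theorem~\ref{thm:LL} applied to the sine-type function $\prod_{j=1}^4\sin\left((\la-\be_j)\pi/4\right)$. Without this reduction, or an equivalent result on vector exponential families, your plan proves completeness but not the Riesz bounds. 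A smaller slip: the remark ``we delete exactly one node, the counting is exact'' is backwards --- removing $(0,1)$ alone would leave a deficient family; exactness is restored by the extra element indexed by $\theta=0$ coming from \eqref{scal4}, and the model system $\mathscr V^0$ is taken over the full lattice $n\in\mathbb Z$, $k=\overline{1,4}$.
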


\begin{proof}
In order to show that a system is a Riesz basis in a Hilbert space, it is sufficient to prove, that this system is complete
and quadratically close to another Riesz basis (see \cite[Section~1.8.5]{FY01}). 
 The assumptions of this theorem and Lemma~\ref{lem:complete}  imply 
the system $\mathscr H_1(\Lambda)$ is complete. 

Since $\Lambda$ satisfies \eqref{defLa}, where the numbers $\{ \be_k \}_{k = 1}^4$ are distinct,
there may be only a finite number of multiple values in $\Lambda$ in view of the asymptotics~\eqref{asymptla}. 
Therefore for $\theta = (n, k) \in \Theta$ with sufficiently large $|n|$, we have $m_{\theta} = m_{nk} = 1$.
By virtue of \eqref{passABS1}, \eqref{passABS2} and \eqref{defV}, we have
$h_{1,\theta 0}(t) = c_{\theta} v_{\theta 0}(t)$ for $\theta \in \Theta$ and some nonzero constants $c_{\theta}$.
Consequently, in order to prove the claim of the theorem, it remains to show that the system $\mathscr V(\Lambda)$ 
is quadratically close to a Riesz basis. 

In view of \eqref{defV}, we have
$$
    v_{n k 0}(t) = \begin{bmatrix}  S_1'(\pi, \la_{nk}) \\ -\la_{nk} S_1'(\pi, \la_{nk}) \end{bmatrix} \exp(i \la_{nk} t), 
$$

The asymptotic relations for $\la_{nk}$, $S_1(\pi, \la)$ and $S_1'(\pi, \la)$ (formulas \eqref{asymptla}, \eqref{asymptS} and \eqref{asymptSp}) yield
$$
    v_{n k 0}(t) = v_{nk}^0(t) + O\left( |n|^{-1} \right), \quad  |n| \to \iy, \quad k = \overline{1, 4},
$$
where 
\begin{equation} \label{defv0}
    v_{nk}^0(t) = \begin{bmatrix} 
			\cos (\be_k - \al_1) \pi \\
			-\sin (\be_k - \al_1) \pi
		  \end{bmatrix} \exp(i (2 n + \be_k) t).				
\end{equation}
Thus, the system $\mathscr V(\Lambda)$ is quadratically close to the system 
$\mathscr V^0 := \{ v_{nk}^0(t) \}_{n \in \mathbb Z, \, k = \overline{1, 4}}$
in $\mathcal H$:
$$
     \sum_{\substack{n \in \mathbb Z\\ |n| \ge n^*}} \sum_{k = 1}^4 \| v_{nk0} - v_{nk}^0 \|^2_{\mathcal H} < \iy.			
$$

The following Lemma~\ref{lem:V0} finishes the proof of the theorem.
\end{proof}

\begin{lem} \label{lem:V0}
The system of vector-functions $\mathscr V^0 := \{ v_{nk}^0(t) \}_{n \in \mathbb Z, \, k = \overline{1, 4}}$
is a Riesz basis in $\mathcal H$, where $v_{nk}^0(t)$ were defined in \eqref{defv0}, $\{ \be_k \}_{k =1}^4$ are distinct real numbers
from $(-1, 1)$, $\al_1 \in \mathbb R$.
\end{lem}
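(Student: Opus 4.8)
The plan is to reduce the claim to a classical Riesz-basis statement about complex exponentials and then to account for the vector structure by a fixed invertible ``coefficient matrix'' acting coordinatewise on the Fourier modes. First I would observe that the scalar exponential system $\{ \exp(i(2n+\be_k)t) \}_{n \in \mathbb Z, \, k = \overline{1,4}}$ is a Riesz basis in $L_2(-\pi,\pi)$. Indeed, after the scaling $t \mapsto 2t$ (or, equivalently, rescaling the interval to length $2\pi$), this is the system $\{ e^{i \mu_{nk} s} \}$ with exponents $\mu_{nk} = 2n + \be_k$ that form a separated set whose counting function differs from that of $\mathbb Z$ by a bounded amount; since the $\be_k$ are distinct and lie in $(-1,1)$, the set $\{2n+\be_k\}$ is uniformly separated and is a small $\ell^\infty$ (in fact bounded) perturbation of $4\mathbb Z \cup (4\mathbb Z + \text{shifts})$. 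One may cite the Kadec-type / Avdonin theorems, or simply the standard results on exponential Riesz bases already used in the paper's toolkit (e.g. \cite{BFY, FY01}); alternatively one can build the generating function $\prod_{k=1}^4 \sin((\la-\be_k)\tfrac{\pi}{2})$ exactly as in \eqref{defp} and run the same Paley--Wiener argument. I would state this as the scalar ingredient and not belabor it.

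Next I would treat the vector part. Write $v_{nk}^0(t) = \mathbf{a}_k \exp(i(2n+\be_k)t)$, where
$$
\mathbf{a}_k = \begin{bmatrix} \cos(\be_k-\al_1)\pi \\ -\sin(\be_k-\al_1)\pi \end{bmatrix} \in \mathbb R^2 .
$$
Each $\mathbf{a}_k$ is a unit vector in $\mathbb R^2$, namely the unit vector making angle $-(\be_k-\al_1)\pi$ with the first axis. The point is that $\mathscr V^0$ decomposes, after the exponential change of variables, into four ``blocks'': for each residue $k$, the closed span of $\{ v_{nk}^0 \}_{n \in \mathbb Z}$ is $\mathbb C\,\mathbf{a}_k \otimes \overline{\mathrm{span}}\{e^{i(2n+\be_k)t}\}_n$. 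I would therefore argue that $\mathscr V^0$ is the image of the Riesz basis $\{ \mathbf e_{r} \exp(i(2n+\be_k)t) \}$ ($r = 1,2$; here I temporarily double the index set) under a bounded, boundedly invertible operator $T$ on $\mathcal H$: concretely, $T$ acts on the $k$-th exponential subspace by the constant $2\times 2$ matrix $M_k$ whose two columns are chosen so that $M_k$ is invertible and its first column is $\mathbf{a}_k$. Since there are only four values of $k$ and each $M_k$ is a fixed invertible matrix, $T$ and $T^{-1}$ are bounded, and the image of a Riesz basis under such an operator is again a Riesz basis. To see that $\mathscr V^0$ itself (with the single index $n,k$, not doubled) is a Riesz basis I instead bound the Gram operator directly: the Gram matrix of $\mathscr V^0$ is block-diagonal over $k$ after passing to the ``almost-orthonormal'' exponential coordinates, with blocks governed by the Gram matrix of the scalar system (uniformly bounded above and below by the scalar Riesz-basis property) tensored with the rank-one projections $\mathbf{a}_k \mathbf{a}_k^{*}$; the cross terms between different $k$ decay because the exponents $2n+\be_k$ and $2n'+\be_{k'}$ are separated, which is exactly the content of the scalar Riesz-basis estimate applied to the union. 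Hence both completeness and the two-sided frame inequality for $\mathscr V^0$ follow.

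The cleanest route, and the one I would actually write out, is the operator route: fix once and for all invertible matrices $M_k$ with first column $\mathbf a_k$ (possible since $\mathbf a_k \ne 0$), let $\Phi = \{ \mathbf e_1 e^{i(2n+\be_k)t}, \mathbf e_2 e^{i(2n+\be_k)t}\}$, which is a Riesz basis of $\mathcal H$ because its two scalar coordinate families are each the scalar Riesz basis above; then $\mathscr V^0 \cup (\text{second columns})$ is $T\Phi$ with $T$ block-diagonal in $k$ and equal to $M_k$ on block $k$, so it is a Riesz basis; finally $\mathscr V^0$ is the ``first half'' of this basis in each block, and I claim it is a Riesz basis of $\mathcal H$ \emph{because} $\dim \mathbb C \mathbf a_k = 1$ accounts for exactly one of the two coordinate copies of the $k$-th exponential subspace — i.e., $\mathscr V^0$ has the right ``density'' (one vector per exponent, one exponent $2n+\be_k$ per pair $(n,k)$) and spans, by the surjectivity of the $M_k$ onto the relevant one-dimensional pieces. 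I expect the main obstacle to be precisely this last bookkeeping step: convincing oneself rigorously that taking one column out of each invertible $M_k$ leaves a Riesz basis, rather than an over- or under-complete system. This is handled by noting that $\mathcal H = \bigoplus_{k=1}^4 \big(\mathbb C^2 \otimes H_k\big)$ with $H_k := \overline{\mathrm{span}}\{e^{i(2n+\be_k)t}\}_n$ — the $H_k$ being \emph{not} orthogonal but forming a Riesz decomposition of $L_2(-\pi,\pi)$ by the scalar result — and that inside each summand $\{\mathbf a_k\} \otimes \{e^{i(2n+\be_k)t}\}_n$ is a Riesz basis of $\mathbb C\mathbf a_k \otimes H_k$ while $\mathbb C\mathbf a_k$ is one-dimensional; summing over $k$ and invoking the Riesz-decomposition of $L_2(-\pi,\pi)$ into the $H_k$ (transported to $\mathbb C^2$) gives that $\mathscr V^0$ is a Riesz basis of $\mathcal H$. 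The only genuine analytic input is the scalar exponential Riesz-basis fact for $\{2n+\be_k\}$; everything else is finite-dimensional linear algebra over the four blocks.
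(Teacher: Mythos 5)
Your proposal breaks down at its very first ingredient: the scalar family $\{\exp(i(2n+\be_k)t)\}_{n\in\mathbb Z,\,k=\overline{1,4}}$ is \emph{not} a Riesz basis in $L_2(-\pi,\pi)$. Its exponents contain four points in every interval of length $2$, i.e.\ twice the critical density for an interval of length $2\pi$; the counting function differs from that of $\mathbb Z$ by a quantity growing like $|\la|$, not by a bounded amount, so the system is overcomplete (already the two subfamilies with $k=1,2$ form a complete Riesz basis, via the sine-type function $\sin((\la-\be_1)\tfrac{\pi}{2})\sin((\la-\be_2)\tfrac{\pi}{2})$). For the same reason the ``doubled'' system $\Phi=\{\mathbf e_r e^{i(2n+\be_k)t}\}$ is not a Riesz basis of $\mathcal H$, and the four subspaces $H_k=\overline{\mathrm{span}}\{e^{i(2n+\be_k)t}\}_{n}$ do not form a Riesz direct-sum decomposition of $L_2(-\pi,\pi)$: each $H_k$ is ``half'' of $L_2(-\pi,\pi)$ (for $\be_k=0$ it is exactly the subspace of $\pi$-periodic functions), so the sum of $\mathbb C^2\otimes H_k$ over the four values of $k$ is badly redundant in $\mathcal H=L_2\oplus L_2$. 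Hence the whole scheme ``embed $\mathscr V^0$ into a larger Riesz basis via blockwise invertible matrices $M_k$ and keep one column per block'' collapses: the larger system you start from is not a basis.

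There is also a structural reason why no argument of your type can work: you use only that the direction vectors $\mathbf a_k$ are nonzero, but at that level of generality the statement is false. Take the same exponents and $\mathbf a_1=\mathbf a_2=\mathbf a_3=\mathbf e_1$, $\mathbf a_4=\mathbf e_2$; then the first-coordinate subfamily is a scalar exponential system of density $3/2$, which admits no lower Riesz bound, so the vector system is not a Riesz basis. Thus any correct proof must use the specific coupling in \eqref{defv0} between the angle $(\be_k-\al_1)\pi$ and the frequency shift $\be_k$. This is exactly what the paper does: it computes the Gram matrix in \eqref{quad} in closed form, and the double-angle identity $2\sin x\cos x=\sin 2x$ turns the product of the $\mathbb R^2$ inner product $\cos(\be_l-\be_j)\pi$ with the exponential integral into precisely the Gram entries of the scalar system $e_{nj}=\exp(i(4n+2\be_j)t)$. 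These \emph{doubled} exponents $4n+2\be_j$ have critical density and are the zeros of the sine-type function $\prod_{j=1}^4\sin((\la-\be_j)\tfrac{\pi}{4})$, so Theorem~\ref{thm:LL} gives the Riesz basis property there, and the coincidence of the Gram forms transfers the two-sided bounds back to $\mathscr V^0$ (completeness being shown separately). You would need to replace your block/linear-algebra reduction by an argument that, like this one, genuinely exploits how the vector directions rotate with $\be_k$.
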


\textit{Proof.}
The completeness of the system $\mathscr V^0$ can be easily shown by the standard methods (see, for example, the proof of Lemma~\ref{lem:complete}).
Denote $\mathcal J := \{ (n, k) \colon n \in \mathbb Z, \, k = \overline{1, 4} \}$.
Let $\{ c_{nj} \}_{(n, j) \in \mathcal J}$ be an arbitrary complex sequence in  $l_2$, and let
\begin{equation} \label{defv}
    v(t) = \sum_{(n, j) \in \mathcal J} c_{nj} v_{nj}^0(t). 
\end{equation}
We will prove that there exist positive constants $M_1$ and $M_2$, such that 
$$
    M_1 \sum_{(n, j) \in \mathcal J} |c_{nj}|^2 \le \| v \|_{\mathcal H}^2 \le M_2 \sum_{(n, j) \in \mathcal J} |c_{nj}|^2.
$$ 
This implies  $\mathscr V^0$ is a Riesz basis.

Indeed, using \eqref{defv} and  \eqref{defv0}, one can easily get 
\begin{equation} \label{quad}
    \| v \|_{\mathcal H}^2 = \sum_{(n, j) \in \mathcal J} \sum_{(k, l) \in \mathcal J} \overline{c_{nj}} c_{kl} (v_{nj}^0, v_{kl}^0)_{\mathcal H}
\end{equation}
and 
\begin{gather*}
    (v_{nj}^0, v_{kl}^0)_{\mathcal H} = \frac{\sin 2 (\be_l - \be_j) \pi}{2k - 2n + \be_l - \be_j}, \quad (n, j), (k, l) \in \mathcal J, \\
    (v_{nj}^0, v_{kl}^0)_{\mathcal H} = 2 \pi, \quad n = k, \: j = l.
\end{gather*}
Note that 
$$
    \int_{-\pi}^{\pi} \exp(-i(4n + 2\be_j)t) \exp(i(4k + 2\be_l)t) \, dt =  \frac{\sin 2 (\be_l - \be_j) \pi}{2k - 2n + \be_l - \be_j},
     \quad (n, j), (k, l) \in \mathcal J,
$$
i.e. the coefficients $(v_{nj}^0, v_{kl}^0)_{\mathcal H}$ of the quadratic form \eqref{quad} equal to
$(e_{nj}, e_{kl})_{L_2(-\pi, \pi)}$, where $e_{nj} := \exp(i(4 n + 2 \be_j) t)$, $(n, j) \in \mathcal J$.
Thus, the system $\mathscr V^0$ is a Riesz basis in $\mathcal H$ if and only if the system 
$\mathscr E := \{ e_{nj} \}_{(n, j) \in \mathcal J}$ is a Riesz basis in $L_2(-\pi, \pi)$. In order 
to investigate the Riesz-basicity of the exponent system,  we  use the following theorem.

\begin{thm}[Levin and Ljubarski\u{\i}\cite{LL75}] \label{thm:LL}
The system of functions $\{ \exp(i \la_k t) \}$, where $\{ \la_k \}$ is the set of zeros of a sine-type function $S(\la)$,
is a Riesz basis in $L_2(-\pi, \pi)$. An entire function of exponential type $S(\la)$ is called the sine-type function,
if for some constants $c$, $C$ and $K$, depending on $S(\la)$, the following inequality is valid:
$$
   0 < c < |S(\la)| \exp(-\pi|\mbox{Im}\,\la|) < C < \iy, \quad |\mbox{Im}\, \la| > K,
$$
and $\inf\limits_{k\ne j} |\la_k - \la_j| > 0$.
\end{thm}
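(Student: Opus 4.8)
The plan is to recognize this as the classical complete-interpolating-sequence criterion and to prove it by transferring the problem to a Paley-Wiener space. First I would pass through the Fourier transform isometry $\mathcal F \colon L_2(-\pi, \pi) \to PW$, where $PW$ denotes the space of entire functions of exponential type $\pi$ that are square-integrable on $\mathbb R$. Under $\mathcal F$ the exponential $\exp(i \la_k t)$ is carried to a fixed normalization of the reproducing kernel of $PW$ at the point $\la_k$. With this identification, the claim that $\{ \exp(i \la_k t) \}$ is a Riesz basis in $L_2(-\pi, \pi)$ becomes the claim that $\{ \la_k \}$ is a \emph{complete interpolating sequence} for $PW$: for every data sequence $\{ c_k \} \in l_2$ there is a unique $f \in PW$ whose suitably weighted values $f(\la_k)$ realize $\{ c_k \}$, with the $PW$-norm of $f$ two-sidedly equivalent to $\| \{ c_k \} \|_{l_2}$. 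The two frame inequalities defining a Riesz basis correspond exactly to the boundedness of the sampling map $f \mapsto \{ f(\la_k) \}$ and of its inverse, the interpolation (synthesis) map.

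Next I would record the geometric consequences of the sine-type hypothesis that drive everything. From the two-sided bound $0 < c < |S(\la)| \exp(-\pi |\mbox{Im}\, \la|) < C$ for $|\mbox{Im}\, \la| > K$ one deduces that the zeros $\{ \la_k \}$ lie in a horizontal strip $|\mbox{Im}\, \la| \le H$, and uniform separation $\inf_{k \ne j} |\la_k - \la_j| > 0$ is assumed outright. A minimum-modulus argument combined with the upper bound then yields $0 < c' \le |S'(\la_k)| \le C'$ uniformly in $k$, and shows $|S(x)|$ stays bounded away from $0$ on $\mathbb R$ outside fixed discs around the zeros. These estimates let me introduce the candidate biorthogonal functions
$$
   g_k(\la) := \frac{S(\la)}{S'(\la_k)(\la - \la_k)},
$$
each of which lies in $PW$ and satisfies $g_k(\la_j) = \de_{jk}$; after applying $\mathcal F^{-1}$ they form a system biorthogonal to $\{ \exp(i \la_k t) \}$, which already gives minimality. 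Completeness follows because any $f \in PW$ vanishing on all $\la_k$ makes the quotient $f / S$ entire and, being bounded on the real axis and on the lines $\mbox{Im}\, \la = \pm K$ by the sine-type estimates, bounded on all of $\mathbb C$ by Phragmen-Lindel\"of, hence constant; since functions in $PW$ vanish at infinity along $\mathbb R$, this forces $f \equiv 0$.

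The upper frame bound is the Plancherel-P\'olya inequality: because the $\la_k$ are uniformly separated and confined to a strip, summing the sub-mean-value estimate for $|f|^2$ over disjoint discs centred at the $\la_k$ gives $\sum_k |f(\la_k)|^2 \le C \| f \|_{PW}^2$ for all $f \in PW$. Equivalently, the sampling map is bounded, which is the Bessel inequality for $\{ \exp(i \la_k t) \}$.

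The main obstacle is the lower bound, i.e. the boundedness of the interpolation map $\{ c_k \} \mapsto \sum_k c_k g_k$ from $l_2$ into $PW$. Here I would show that the series converges in $PW$ and that $\| \sum_k c_k g_k \|_{PW}^2 \le M \sum_k |c_k|^2$, which together with the biorthogonality and completeness already established upgrades the two inequalities to the full Riesz-basis equivalence. The essential point is to control the off-diagonal inner products $(g_j, g_k)_{PW}$: using the explicit form of $g_k$, the two-sided sine-type bound for $|S|$ on horizontal lines, and the separation of the zeros, one estimates $\sum_{j \ne k} |(g_j, g_k)_{PW}|$ and applies a Schur test (or a Cotlar-Stein argument) to bound the Gram operator of $\{ g_k \}$ both above and below. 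It is precisely the \emph{lower} sine-type bound $c < |S(\la)| \exp(-\pi |\mbox{Im}\, \la|)$ that prevents the zeros from clustering and keeps this Gram operator boundedly invertible; discarding it would allow the density of $\{ \la_k \}$ to exceed $1$ and destroy the lower frame bound. Once the Gram operator is shown to be bounded and boundedly invertible, the normalized system $\{ \exp(i \la_k t) / \| \exp(i \la_k t) \| \}$ is the image of an orthonormal basis under a bounded invertible operator, hence a Riesz basis, completing the proof.
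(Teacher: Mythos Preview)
The paper does not prove this theorem at all: it is quoted verbatim as a classical result of Levin and Ljubarski\u{\i} \cite{LL75} and used as a black box in the proof of Lemma~\ref{lem:V0}. There is therefore no ``paper's own proof'' to compare your attempt against.

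That said, your sketch follows the standard route to the Levin--Ljubarski\u{\i} theorem (Fourier transform to the Paley--Wiener space, the biorthogonal system $g_k(\la) = S(\la)/(S'(\la_k)(\la-\la_k))$, completeness via a Phragmen--Lindel\"of argument, Plancherel--P\'olya for the Bessel bound). The one place where your outline is genuinely thin is the lower frame bound: saying you will ``estimate $\sum_{j\ne k}|(g_j,g_k)_{PW}|$ and apply a Schur test'' hides the real work, since the Gram entries decay only like $|\la_j-\la_k|^{-1}$ and a naive Schur bound diverges. The actual argument (in \cite{LL75} or in Levin's \emph{Lectures on Entire Functions}) goes through a Carleson-type embedding or a Hilbert-transform boundedness estimate rather than a bare Schur test; you would need to make that step precise before the proof is complete. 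For the purposes of this paper, however, none of that is required---the authors simply invoke the theorem.
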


Clearly, the numbers $(4 n + 2 \be_j)$, $(n, j) \in \mathcal J$, are the zeros of the sine-type function
$$
    S(\la) := \prod_{j = 1}^4 \sin \left( (\la - \be_j) \tfrac{\pi}{4} \right),
$$
so by virtue of Theorem~\ref{thm:LL}, the system $\mathscr E$ is a Riesz basis. Hence $\mathscr V^0$ is also a Riesz basis. 
$\hfill\Box$ 

\medskip

In view of Theorem~\ref{thm:Riesz}, if a subspectrum $\Lambda$ satisfies \eqref{defLa} and assumption $(B)$,
one can solve Inverse Problem~\ref{ip:1} by using the following algorithm.

\begin{alg}
Suppose functions $\{ p_j \}_{j = 2}^m$, $\{ q_j \}_{j = 2}^m$  and a subspectrum $\Lambda$, which satisfies  \eqref{defLa} and assumption ($B$), are given. 

\begin{enumerate}
\item Find the solutions $S_j(x_j, \la)$ of equations~\eqref{eqv} for $j = \overline{2, m}.$ 
\item Find 
$$
\al_j = \frac{1}{\pi} \int_0^{\pi} p_j(t) \, dt, \: j = \overline{2, m}, \quad \be_1 = \lim_{n \to \iy} (\la_{n1} - 2n),
$$
\item Solving eq.~\eqref{eqal1} for $\la = \be_1$, find 
$$
   \al_1 = \be_1 - \frac{1}{\pi} \mbox{arccot}\, \left( \frac{2(1 - \cos \be_1 \pi)}{\sin \be_1 \pi} - \sum_{j = 2}^{m-1} \cot (\be_1 - \al_j) \pi\right).
$$
\item Construct $p_1$ and $q_1$, following the steps~2--6 of Algorithm~\ref{alg:1}.
\end{enumerate} 
\end{alg}

Note that the number $\al_1$ can be determined incorrectly on step~3. In fact, we find ($\al_1 \bmod 1$). Nevertheless, 
in view of the proof of Theorem~\ref{thm:uniq1}, it does not influence the final result.

\bigskip

{\large \bf 6. Uniqueness Theorem for Inverse Problem~\ref{ip:m}}

\bigskip

The main goal of this section  is to prove the uniqueness Theorem~\ref{thm:uniqm}.
Our arguments resemble the proof of Theorem~\ref{thm:uniq1} in Section~6, so we will demonstrate only the general scheme and
will not elaborate into details. We start with the proof of the auxiliary lemma.

\begin{proof}[Proof of Lemma~\ref{lem:om}]
%Let assumption $(C)$ hold, i.e. $\om_n \ne 0$ for all $n \in \mathbb Z$. 
We have to prove that 
$d_m(\nu_n) \ne 0$ for all  zeros $\{ \nu_n \}_{n \in \mathbb Z}$  of $S_m(\pi, \la).$
Suppose on the contrary, $d_m(\nu_n) = 0$ for some $n \in \mathbb Z$. That means
\begin{equation} \label{CS1}
    S_m'(\pi, \nu_n) + C_m(\pi, \nu_n) - 2 = 0.
\end{equation}

Using \eqref{eqv} and \eqref{ic} for $j = m$, one can easily derive the relation
$$
   C_m(\pi, \la) S_m'(\pi, \la) - C_m'(\pi, \la) S_m(\pi, \la) \equiv 1.
$$
For $\la = \nu_n$, that yields 
\begin{equation} \label{CS2}
    C_m(\pi, \nu_n) S'_m(\pi, \nu_n) = 1.
\end{equation}

The relations \eqref{CS1} and \eqref{CS2} together yield $C_m(\pi, \nu_n) = S_m'(\pi, \nu_n) = 1$. Then   
$\omega_n=Q(\nu_n) = C_m(\pi, \nu_n) - S_m'(\pi, \nu_n)=0$.  This is a contradiction to the assumption ($C$). \end{proof}

Now let us derive the main equations for Inverse Problem~\ref{ip:m}.
Rewrite the relation \eqref{defDelta} in the form
\begin{equation} \label{Deltam}
   \Delta(\la) = A_m(\la) S_m(\pi, \la) + B_m(\la) d_m(\la), 
\end{equation}
where
$$
    A_m(\la) := \sum_{j = 1}^{m-1} S_j'(\pi, \la) \prod_{\substack{k = 1 \\ k \ne j}}^{m-1} S_k(\pi,\la), \quad
    B_m(\la) :=  \prod_{k = 1}^{m-1} S_k(\pi, \la).
$$

For simplicity, we assume that $\al_m = 0$.
Using \eqref{intSp} and \eqref{intC} for $j = m$, we conclude that
\begin{equation} \label{intdm}
    d_m(\la) = 2 \cos \la \pi - 2 + \int_{-\pi}^{\pi} T_m(t) \exp(i \la t) \, dt,
\end{equation}
where $T_m(t) = N_m(t) + L_m(t)$ is  continuous on $[-\pi, \pi]$.
Using \eqref{intSp}, \eqref{Deltam} and \eqref{intdm}, we obtain the relation
\begin{equation} \label{intABm}
   A_m(\la) \int_{-\pi}^{\pi} K_m(t) \exp(i \la t) \, dt + \la B_m(\la) \int_{-\pi}^{\pi} T_m(\la) \exp(i \la t) \, dt 
- G_m(\la) = \la \Delta(\la),
\end{equation}
where
$$
   G_m(\la) = -A_m(\la) \sin \la \pi - \la B_m(\la) (2 \cos \la \pi - 2).
$$
The index $m$ denotes the connection with the inverse problem, that consists in recovering the pencil coefficients
on the edge $e_m$, i.e. on the loop.

Define the vector functions
$$
    f_m(t) = \begin{bmatrix}
                \overline{K_m(t)} \\ \overline{T_m(t)}
	     \end{bmatrix},
    \quad
    h_m(t, \la) = \begin{bmatrix}
                     A_m(\la) \\ \la B_m(\la)
		\end{bmatrix} \exp(i \la t).		
$$

Since the functions $K_m$ and $T_m$ are continuous on $[-\pi, \pi]$, we have $f_m \in \mathcal H$.
Clearly, the vector function $h_m(t, \la)$ and its derivatives with respect to $\la$ also belong
to $\mathcal H$ for each fixed $\la \in \mathbb C$. Therefore the relation \eqref{intABm} can be rewritten
in the form
\begin{equation} \label{scalm}
  (f_m(t), h_m(t, \la))_{\mathcal H} = G_m(\la) + \la \Delta(\la).
\end{equation}

Suppose assumption $(C)$ is fulfilled for the problem $L$.
Let $\Lambda = \{ \lambda_{\theta} \}_{\theta \in \Theta}$ be a subspectrum $L$ which satisfies assumption $(D)$. 
Recall that $\la_{\theta}$ is a zero of the function $\Delta(\la)$ of multiplicity at least $m_{\theta}$.
For $\theta \in \Theta_1$ we have $S_m(\pi, \la_{\theta}) \ne 0$
and $B_m(\la_{\theta}) \ne 0$. 
For $\theta \in \Theta_2$ we have $S_m(\pi, \la_{\theta}) = 0$ and $B_m(\la_{\theta}) = 0$,
but in view of $(C)$, $d_m(\la_{\theta}) \ne 0$ and $A_m(\la_{\theta}) \ne 0$. For simplicity, we also assume that 
$0 \not \in \Lambda$, $0 \not \in \Theta$, 
and $\Theta_0 := \Theta \cup \{ 0 \}$, $\la_0 := 0$, $m_0 := 1$. 
The relation \eqref{intS} for $j = m$ and the equality $\al_m = 0$ yield
$$
\int_{-\pi}^{\pi} K_m(t) \, dt = 0.
$$

Thus, according to the mentioned facts, we arrive at the following main equations
\begin{equation} \label{mainm}
(f_m, h_{m, \theta j}(t, \mu))_{\mathcal H} = G_{m, \theta j}, \quad \theta \in \Theta_0, \quad j = \overline{0, m_{\theta}-1},
\end{equation}
where
\begin{gather*}
    h_{m,00}(t) = \begin{bmatrix} 1 \\ 0 \end{bmatrix}, \quad G_{m, 00} = 0, \\
    h_{m,\theta j}(t) = \frac{d^j}{d\la^j} \left(\frac{h_m(t, \la)}{B_m(\la)}\right) \Big|_{\la = \la_{\theta}}, \quad
    G_{m,\theta j} = \frac{d^j}{d\la^j} \left(\frac{G_m(\la)}{B_m(\la)}\right) \Big|_{\la = \la_{\theta}}, \quad
    \theta \in \Theta_1, \\
    h_{m,\theta j}(t) = \frac{d^j}{d\la^j} \left(\frac{h_m(t, \la)}{A_m(\la)}\right) \Big|_{\la = \la_{\theta}}, \quad
    G_{m, \theta j}(\mu) = \frac{d^j}{d\la^j} \left(\frac{G_m(\la)}{A_m(\la)}\right) \Big|_{\la = \la_{\theta}}, \quad
    \theta \in \Theta_2.	
\end{gather*}

Define the sequences
\begin{equation} \label{defHGm}
   \mathscr H_m(\Lambda) := \{ h_{m, \theta j}(t) \}_{\theta \in \Theta_0, \, j = \overline{0, m_{\theta} - 1}}, \quad
   \mathscr G_m(\Lambda) := \{ G_{m, \theta j}\}_{\theta \in \Theta_0, \, j = \overline{0, m_{\theta} - 1}}.
\end{equation}

The following theorem is analogous to Theorem~\ref{thm:uniq1-abs}.

\begin{thm} \label{thm:uniqm-abs}
Suppose that $p_j = \tilde p_j$ in $AC[0,\pi]$  and $q_j = \tilde q_j$ 
in $L(0, \pi)$ for $j = \overline{1, m-1},$ 
 $\Lambda = \tilde \Lambda$, $\Omega = \tilde \Omega$, $\al_m = \tilde \al_m = 0$,  and assumptions
$(C)$ and $(D)$ hold for the both pairs $(L, \Lambda)$ and $(\tilde L, \tilde \Lambda)$,
and the system $\mathscr H_m(\Lambda)$ is complete in $\mathcal H$. Then $p_m = \tilde p_m$ in $AC[0, \pi]$
and $q_m = \tilde q_m$ in $L(0, \pi)$. Thus, under the above assumptions, the solution of Inverse Problem~\ref{ip:m}
is unique.
\end{thm}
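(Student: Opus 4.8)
The plan is to follow the scheme of the proof of Theorem~\ref{thm:uniq1-abs}, with the loop $e_m$ playing the role of the boundary edge $e_1$ and with the finite-interval Weyl-function argument replaced by a reduction to the periodic Inverse Problem~\ref{ip:period}. First I would note that, since $p_j = \tilde p_j$ and $q_j = \tilde q_j$ for $j = \overline{1, m-1}$, the corresponding transformation kernels coincide, so $S_j(\pi, \la) \equiv \tilde S_j(\pi, \la)$ and $S_j'(\pi, \la) \equiv \tilde S_j'(\pi, \la)$ for $j = \overline{1, m-1}$. By the definitions of $A_m(\la)$ and $B_m(\la)$ (the display following \eqref{Deltam}), this gives $A_m(\la) \equiv \tilde A_m(\la)$ and $B_m(\la) \equiv \tilde B_m(\la)$, hence $h_m(t, \la) \equiv \tilde h_m(t, \la)$; and since $\al_m = \tilde \al_m = 0$, the function $G_m(\la)$ is assembled only from $A_m$, $B_m$ and $\sin \la \pi$, $\cos \la \pi$, so $G_m(\la) \equiv \tilde G_m(\la)$. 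Consequently the systems and the right-hand sides in \eqref{defHGm} satisfy $\mathscr H_m(\Lambda) = \tilde{\mathscr H}_m(\tilde \Lambda)$ and $\mathscr G_m(\Lambda) = \tilde{\mathscr G}_m(\tilde \Lambda)$, using $\Lambda = \tilde \Lambda$ (as multisets, with matching multiplicities).

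Next I would write the main equations \eqref{mainm} for $\tilde L$ and subtract them from those for $L$. This shows that $f_m - \tilde f_m$ is orthogonal in $\mathcal H$ to every element of $\mathscr H_m(\Lambda)$. Invoking the assumed completeness of $\mathscr H_m(\Lambda)$ gives $f_m = \tilde f_m$, i.e. $K_m = \tilde K_m$ and $T_m = \tilde T_m$ in $L_2(-\pi, \pi)$. Then the two scalar characteristics of the loop are recovered separately: relation \eqref{intS} for $j = m$ (with $\al_m = 0$) together with $K_m = \tilde K_m$ yields $S_m(\pi, \la) \equiv \tilde S_m(\pi, \la)$, while \eqref{intdm} together with $T_m = \tilde T_m$ yields $d_m(\la) \equiv \tilde d_m(\la)$.

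At this point we have exactly the data of Inverse Problem~\ref{ip:period}: the functions $S_m(\pi, \la)$, $d_m(\la)$ and the sequence $\Omega$ (the latter by the hypothesis $\Omega = \tilde \Omega$). Assumption $(C)$ holds for both problems, so Lemma~\ref{lem:om} applies and the periodic problem is well posed; applying Theorem~\ref{thm:period} I conclude $p_m = \tilde p_m$ in $AC[0, \pi]$ and $q_m = \tilde q_m$ in $L(0, \pi)$, which proves the uniqueness.

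The reduction itself is routine once the main equations \eqref{mainm} are in place, so the only substantive ingredients are the completeness of $\mathscr H_m(\Lambda)$ — assumed here in the abstract statement, and to be verified for concrete subspectra $\Lambda$ of the form \eqref{defLa} by the same Phragmen-Lindel\"of and Riesz-basis technique used in Lemma~\ref{lem:complete} — together with the external result Theorem~\ref{thm:period}. The one point I would check with care is that $K_m$ and $T_m$ really determine $S_m(\pi, \la)$ and $d_m(\la)$ independently (i.e. there is no hidden linear relation between the two components of $f_m$ forcing, say, $d_m$ to be undetermined by $T_m$ alone), which follows from \eqref{intS}, \eqref{intSp}, \eqref{intC} and the identity $T_m = N_m + L_m$.
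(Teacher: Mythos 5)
Your proposal is correct and follows essentially the same route as the paper: the paper's own proof simply says that, analogously to Theorem~\ref{thm:uniq1-abs}, one obtains $S_m(\pi,\la) \equiv \tilde S_m(\pi,\la)$ and $d_m(\la) \equiv \tilde d_m(\la)$, and then applies Theorem~\ref{thm:period} using assumption $(C)$ and $\Omega = \tilde \Omega$. You have merely spelled out the analogous steps (equality of $A_m$, $B_m$, $h_m$, $G_m$, subtraction of the main equations \eqref{mainm}, and completeness giving $f_m = \tilde f_m$), which matches the intended argument.
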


\begin{proof}
Let the problems $L$, $\tilde L$ and their subspectra $\Lambda$, $\tilde \Lambda$ satisfy the conditions of the theorem.
Analogously to the proof of Theorem~\ref{thm:uniq1-abs}, we can show that $S_m(\pi, \la) \equiv \tilde S_m(\pi, \la)$,
$d_m(\pi, \la) \equiv \tilde d_m(\pi, \la)$. Since assumption $(C)$ holds and $\Omega = \tilde \Omega$,
we apply Theorem~\ref{thm:period} and conclude that $p_m = \tilde p_m$ in $AC[0, \pi]$ and $q_m = \tilde q_m$ 
in $L(0, \pi)$.
\end{proof}

\begin{proof}[Proof of Theorem~\ref{thm:uniqm}]
Let the problems $L$, $\tilde L$ and their subspectra $\Lambda$, $\tilde \Lambda$ satisfy the conditions of Theorem~\ref{thm:uniqm}.
In particular, the subspectrum $\Lambda$ has the form~\eqref{defLa}, and asymptotic formulas \eqref{asymptla}
are valid for $\la_{nk}$. Similarly to the proof of Lemma~\ref{lem:complete}, one can show that, under those conditions, 
the system of vector-functions $\mathscr H_m(\Lambda)$ is complete in $\mathcal H$. Consequently, Theorem~\ref{thm:uniqm-abs} leads to 
the assertion of Theorem~\ref{thm:uniqm} and this completes the proof.

\end{proof}

Thus, we have shown the uniqueness for solution of Inverse Problem~\ref{ip:m}. If the system $\mathscr H_m(\Lambda)$
is an unconditional basis in $\mathcal H$, one can obtain a constructive algorithm for solving this problem,
similar to Algorithm~\ref{alg:1}.

\medskip

{\bf Acknowledgments}. This work was supported by the Mathematics Research Promotion Center of Taiwan.
The author N.P.~Bondarenko was also supported by Grants
20-31-70005 and 19-01-00102 of the Russian Foundation for Basic Research. 
The author Chung-Tsun Shieh was partially supported by Ministry of Science and Technology, Taiwan under 
Grant 106-2115-M-032 -004 -.

\medskip

\noindent Natalia Pavlovna Bondarenko \\
1. Department of Applied Mathematics and Physics, Samara National Research University, \\
Moskovskoye Shosse 34, Samara 443086, Russia, \\
2. Department of Mechanics and Mathematics, Saratov State University, \\
Astrakhanskaya 83, Saratov 410012, Russia, \\
e-mail: {\it BondarenkoNP@info.sgu.ru}

\medskip

\noindent Chung-Tsun Shieh \\
Department of Mathematics, 
Tamkang University, \\
151 Ying-chuan Road Tamsui,  New Taipei County, Taiwan. \\
e-mail: {\it ctshieh@mail.tku.edu.tw}


\begin{thebibliography}{99}

\bibitem{BCFK06}
Berkolaiko, G.; Carlson, R.; Fulling, S.; Kuchment, P. Quantum Graphs and Their Applications,
Contemp. Math. 415, Amer. Math. Soc., Providence, RI, 2006.

\bibitem{Exner08}
Analysis on Graphs and Its Applications, edited by P. Exner, J. P. Keating, P. Kuchment,
T. Sunada and A. Teplyaev. Proceedings of Symposia in Pure Mathematics, AMS, 77.
(2008).

\bibitem{Yur16}
Yurko, V. A. Inverse spectral problems for differential operators on spatial networks, Russian Mathematical Surveys 71 (2016), no.~3, 539--584.

\bibitem{GG81}
Gasymov, M. G.; Guseinov, G. Sh.
Determination of diffusion operator from spectral data,
Akad. Nauk Azerb. SSR. Dokl. 37 (1981), 19--23.

\bibitem{BY06}
Buterin, S.A.; Yurko, V. A.
Inverse spectral problem for pencils of differential operators
on a finite interval, Vestnik Bashkir. Univ. (2006), no.~4, 8--12 (Russian).

\bibitem{BY12}
Buterin, S.A.; Yurko V. A. Inverse problems for second-order
differential pencils with Dirichlet boundary conditions, J. Inverse
Ill-Posed Probl. 20 (2012), no.~5--6, 855--881. 

\bibitem{HP12}
Hryniv, R.; Pronska, N. Inverse spectral problems for energy-dependent Sturm-Liouville
equations, Inverse Problems 28 (2012), 085008 (21 pp).

\bibitem{Pro14}
Pronska, N. Reconstruction of energy-dependent Sturm-Liouville operators from two spectra. Integral Equations and Operator Theory 76 (2013), no.~3, 403--419.

\bibitem{Yur08-tree}
Yurko, V. Recovering differential pencils on compact graphs, J. Differential Equations 244 (2008), 431--443.

\bibitem{Yur14-cycle}
Yurko, V. An inverse problem for differential pencils on graphs with a cycle, J. Inverse Ill-Posed Probl. 22 (2014),
625--641.

\bibitem{Yur16-rooted}
Yurko, V. Inverse spectral problems for differential pencils on a graph with a rooted cycle, 
Inverse Problems in Science and Engineering 24 (2016), no. 9, 1647--1660.

\bibitem{Yur17-a}
Yurko, V. Inverse problems for differential pencils on A-graphs, J. Inverse Ill-Posed Probl. 25 (2017), no.~6, 819--828.

\bibitem{Yur17-bush}
Yurko, V. Inverse problems for differential pencils on bush-type graphs, Results Math. 71 (2017),
1047--1062.

\bibitem{Yur19}
Yurko, V. Inverse spectral problems for differential pencils on arbitrary compact graphs, Differential Equations 55 (2019), no.~1,
24--33.	

\bibitem{Bond17}
Bondarenko, N. P. A partial inverse problem for the differential pencil on a star-shaped graph, 
Results in Mathematics, 72 (2017), no.~4, 1933--1942. 

\bibitem{Bond19}
Bondarenko, N. P. Inverse problem for the differential pencil on an arbitrary graph with partial information given on the coefficients,
Anal. Math. Phys. 9 (2019), no.~3, 1393--1409.	

\bibitem{HL78}
Hochstadt, H.; Lieberman, B. An inverse Sturm-Liouville problem with mixed given data, SIAM J. Appl. Math. 34 (1978), no.~4, 676--680.

\bibitem{GS00}
Gesztesy, F.; Simon, B. Inverse spectral analysis with partial information on the potential, II. The case of discrete spectrum,
Trans. AMS 352 (2000), no.~6, 2765--2787. 

\bibitem{Sakh01}
Sakhnovich, L. Half-inverse problems on the finite interval, Inverse Problems 17 (2001), 527--532.

\bibitem{HM04-half}
Hryniv, R.O.; Mykytyuk, Ya. V.  Half-inverse spectral problems for Sturm-Liouville
operators with singular potentials, Inverse Problems 20 (2004), 1423--1444.

\bibitem{MP10}
Martinyuk, O.; Pivovarchik, V. On the Hochstadt-Lieberman theorem, Inverse Problems 26 (2010), 035011 (6pp)

\bibitem{Yang10}
Yang, C.-F. Inverse spectral problems for the Sturm-Liouville operator on a $d$-star graph, 
J. Math. Anal. Appl. 365 (2010), 742--749.

\bibitem{Yang11}
Yang, C.-F.; Yang X.-P. Uniqueness theorems from partial information of the potential on a
graph, J. Inverse Ill-Posed Prob. 19 (2011), 631--639.

\bibitem{Bond18-amp}
Bondarenko, N. P. A partial inverse problem for the Sturm-Liouville operator on a star-shaped graph, Anal. Math. Phys. (2018), Vol. 8, Issue 1. Pp. 155-168.

\bibitem{Bond18-mixed}
Bondarenko, N. P. Partial inverse problems for the Sturm-Liouville operator on a star-shaped graph
with mixed boundary conditions, J. Inverse Ill-Posed Probl. 26 (2018), no. 1, 1--12.

\bibitem{BSh17}
Bondarenko, N.; Shieh, C.-T. Partial inverse problems on trees, Proceedings of the Royal Society of Edinburgh Section A: Mathematics 147A (2017), 917--933.

\bibitem{YW17}
Yang, C.-F.; Wang, F. Inverse problems on graphs with loops, J. Inverse Ill-Posed Probl. 25 (2017), no.~3, 373--380.

\bibitem{Bond18}
Bondarenko, N. P. A 2-edge partial inverse problem for the Sturm-Liouville operators with singular potentials on a star-shaped graph,
Tamkang J. Math. 49 (2018), no.~1, 49--66.

\bibitem{BY18}
Bondarenko, N. P.; Yang, C.-F. Partial inverse problems for the Sturm-Liouville operator on a star-shaped graph with different 
edge lengths, Results Math. (2018) 73:56, DOI: https://doi.org/10.1007/s00025-018-0817-6.

\bibitem{FY01}
Freiling, G.; Yurko, V. Inverse Sturm-Liouville Problems and Their Applications. Huntington,
NY: Nova Science Publishers (2001).

\bibitem{Yur12-period}
Yurko, V. A. Inverse problems for non-selfadjoint quasi-periodic differential pencils,
Anal. Math. Phys. 2 (2012), no. 3, 215--230.

\bibitem{Pro13}
Pronska, N. I. Asymptotics of eigenvalues and eigenfunctions of energy-dependent Sturm-Liouville
equations, Mat. Stud. 40 (2013), 38--52.

\bibitem{BFY}
Buterin, S. A.; Freiling, G.; Yurko, V. A. Lectures in the theory of entire functions, Schriftenriehe der Fakult\"at f\"ur Matematik, Duisbug-Essen University, SM-UDE-779 (2014).

\bibitem{BB17}
Bondarenko, N.; Buterin, S. On Recovering the Dirac Operator with an Integral Delay from the Spectrum, Results Math. 71 (2017), 1521--1529.

\bibitem{LL75}
Levin, B. Ja.; Ljubarski\u{\i}, Ju. I.
Interpolation by entire functions belonging to special classes and related expansions in series of exponentials,
Math. USSR-Izv. 9 (1975), no.~3, 621--662.

\end{thebibliography}
\end{document}